\newcolumntype{d}[1]{D{.}{.}{#1}}
\theoremstyle{plain}
\newtheorem{theorem}{Theorem}[section]
\newtheorem*{theorem*}{Theorem}
\newtheorem{lemma}[theorem]{Lemma}
\newtheorem{proposition}[theorem]{Proposition}
\newtheorem*{conjecture*}{Conjecture}
\theoremstyle{definition}
\newtheorem{example}[theorem]{Example}
\theoremstyle{remark}
\newtheorem{remark}{Remark}
\newtheorem*{remark*}{Remark}
\newtheorem*{example*}{Example}
\newtheorem*{remarks*}{Remarks}
\numberwithin{equation}{section}
\newcommand{\Log}{{\mathrm {Log}}}
\newcommand*{\rom}[1]{\expandafter\@slowromancap\romannumeral #1@}
\def\({\left(}
\def\){\right)}
\begin{document}
\begin{abstract}
We compute the asymptotic expansion of a function commented on by Ramanujan and show the expansion suggested in his last letter to hardy is asymptotic and non convergent.  This offers some explanation as to why Ramanujan says this series in question ``has an unclosed form at the singularities"
\end{abstract}
\title{Explaining Ramanujan's ``Unclosed" Comment in His Last Letter to Hardy}

\author{Daniel Parry}
\address{Department of Mathematics, University of Cologne}
\email{dan.t.parry@gmail.com}
\subjclass[2010]{Primary: 05A30, Secondary: 05A16, 11P82 }

\maketitle

\section{Introduction}
\let\thefootnote\relax\footnotetext{The author is supported by the European Research Council (ERC) Grant agreement n. 335220 -
AQSER.}
	
In Ramanujan's famous last letter to Hardy, he ponders the definition of what he calls mock theta functions \cite{MR1862750}. In a truly scientific fashion, he communicates a loose definition and lists the well-known 13 positive examples of what he thinks a mock theta function ought to be and at least one negative example of what a mock theta function is not. The positive examples have been extensively studied and have been a part of a vibrant research program in modular forms and number theory \cite{MR2605321}. However, the one particular negative example appears to not be nearly as well studied. This is the function
$$
F(q) =: \sum_{m=0}^\infty \frac{q^{\frac{m(m+1)}{2}}}{(q)_m^2}
$$
with $(z;q)_m=: \prod_{n=0}^{m-1}(1-zq^m),$ $(q)_m =: (q;q)_m$ denote the $q$-pochhammer symbol. Quoting Ramanujan, he states that he believes $F(q)$ has ``an unclosed form at the singularities." Focusing specifically on the $q=1$ singularity, he states the following asymptotic development as $s\to 0,$
$$
F(e^{-s}) = \sqrt{\frac{s}{2\pi\sqrt{5} }}\exp\left(\frac{\pi^2}{5s}+ \frac{1}{8\sqrt{5}}s+c_2s^2+\dots + c_m s^m +O_m\left(s
^{m+1}\right)\right).
$$
For the rest of this paper, we fix $s=-\Log \left(q\right).$
Several prominent mathematicians have provided commentary on Ramanujan's work over the years \cite{MR2744843,MR1862757,MR1573993, MR2135178, MR2474043, MR2952081,MR3113512}. In particular, both Andrews and Watson \cite{MR1862757,MR1573993} have provided commentary on this letter. While Andrews focus is on explaining the difference between theta series, false theta series, and mock theta series, Watson makes some attempt explaining Ramanujan's comment by resorting to a heuristic interpolation argument but nothing rigorous was stated. As of now, there is little rigorous work on this non example.
This work will attempt to quantify
the bad behavior of $F(q)$ at the main root of unity $q\to 1.$ We will do this through a simple observation; $F(q)$ is the dot product of two well known $q$-binomial objects. This leads to a constant term argument which, after unpacking notation, reveals an everywhere divergent power series expansion for $F(e^{-s}).$
\begin{remark}
The method chosen for this paper was the constant term method. However, it was suggested by Larry Rolen that that one might be able to rewrite
$$
F\left(e^{-s}\right) = \sum_{m=0}^\infty \frac{e^{-s m(m+1)/2}}{\Gamma_{e^{-s}}(m+1)^2}(1-e^{-s})^{-2m}  \thicksim \int_{0}^\infty  \frac{e^{-s m(m+1)/2}}{\Gamma(m+1)^2}(1-e^{-s})^{-2m} dm
$$
which allows one to use the Euler Maclaurin summation formula to analyze $F(q)$ as $s\to 0.$ Here $\Gamma_q(z)$ denotes the $q$-gamma function as was studied and approximated by Zhang \cite{MR3128410} (or in a less direct way) McIntosh \cite{MR1703273}. A successful Euler Maclaurin analysis should then yield a proof which is elementary and independent of complex analysis. \end{remark}
\section{Acknowledgments}
I would like to thank Kathrin Bringmann and Larry Rolen for their helpful comments and careful readings on earlier versions of this manuscript and for suggesting the problem.
\section{Statement of Results}
This paper will confirm the comments of Ramanujan in \cite[page 93-94]{MR1862750} if we will take McIntosh's \cite{MR2511670} interpretation of closed. That is, we will prove
\begin{theorem}\label{nonclosedstatement}As $s\to 0,$ 
$$
F(e^{-s}) = \sqrt{\frac{s}{2\pi\sqrt{5} }}\exp\left(\frac{\pi^2}{5s}+ \frac{1}{8\sqrt{5}}s+c_2s^2+\dots + c_m s^m+ O_m\left(s^{m+1}\right)\right)
$$
for infinitely many nonzero $c_j.$ 
\end{theorem}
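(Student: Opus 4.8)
The plan is to carry out the constant-term strategy announced in the introduction, in four steps. \emph{Step 1: an integral representation.} Euler's two $q$-exponential identities
\[
(-zq;q)_\infty=\sum_{m\ge0}\frac{q^{m(m+1)/2}}{(q)_m}z^m,\qquad
\frac1{(z;q)_\infty}=\sum_{m\ge0}\frac{z^m}{(q)_m}
\]
converge for all $z$ and for $|z|<1$ respectively, so reading off the coefficient of $z^m$ in each factor and using $\sum_m a_mb_m=[t^0]\bigl(A(t)B(1/t)\bigr)$,
\[
F(q)=\sum_{m\ge0}\frac{q^{m(m+1)/2}}{(q)_m^2}=\frac1{2\pi i}\oint_{|t|=r}(-tq;q)_\infty\,\frac1{(t^{-1};q)_\infty}\,\frac{dt}{t},\qquad r>1.
\]
Rewriting $(t^{-1};q)_\infty=(1-t^{-1})(q/t;q)_\infty$, the integrand equals $(-tq;q)_\infty\,(q/t;q)_\infty^{-1}\,(t-1)^{-1}$, which is holomorphic in $\{|t|>1\}$ (all its poles lie in $(0,1]$).

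\emph{Step 2: asymptotics of the integrand as $s\to0$.} Put $q=e^{-s}$. From $\int_0^\infty\log(1+te^{-u})\,du=-\mathrm{Li}_2(-t)$ and $\int_0^\infty\log(1-t^{-1}e^{-u})\,du=-\mathrm{Li}_2(1/t)$, the Euler--Maclaurin formula gives, uniformly on compact subsets of $\C\setminus\bigl((-\infty,-1]\cup[0,1]\bigr)$, a complete expansion
\[
\log\!\left((-tq;q)_\infty\,\frac1{(q/t;q)_\infty}\,\frac1{t-1}\right)=\frac1s\Phi_{-1}(t)+\Phi_0(t)+s\,\Phi_1(t)+\cdots,\qquad s\to0^+,
\]
with $\Phi_{-1}(t)=\mathrm{Li}_2(1/t)-\mathrm{Li}_2(-t)$ and the $\Phi_k$ elementary (logarithms and rational functions weighted by Bernoulli numbers). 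Since $\Phi_{-1}'(t)=t^{-1}\log\bigl((t^2-1)/t\bigr)$, the phase has on $(1,\infty)$ the \emph{unique} critical point $\varphi=\tfrac{1+\sqrt5}2$, the positive root of $t^2-t-1=0$; and the golden-section dilogarithm values $\mathrm{Li}_2(1/\varphi)=\tfrac{\pi^2}{10}-\log^2\varphi$, $\mathrm{Li}_2(-\varphi)=-\tfrac{\pi^2}{10}-\log^2\varphi$ give $\Phi_{-1}(\varphi)=\tfrac{\pi^2}5$, while one computes $\Phi_{-1}''(\varphi)=\tfrac{3\sqrt5-5}2>0$ and $\Phi_0(\varphi)=-\log\varphi$.

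\emph{Step 3: steepest descent.} Deform $|t|=r$ to a contour through $\varphi$ along the local steepest-descent direction there (vertical, since $\varphi$ is a minimum of $\Phi_{-1}$ on the real axis); no poles are crossed. Laplace's method carried to all orders (Watson's lemma) then yields a complete asymptotic expansion
\[
F(e^{-s})=e^{\pi^2/(5s)}\sqrt{\frac{s}{2\pi\sqrt5}}\,\bigl(1+a_1 s+a_2 s^2+\cdots\bigr),\qquad s\to0^+,
\]
in which the square-root prefactor is exactly $\tfrac1{2\pi}e^{\Phi_0(\varphi)}\sqrt{2\pi s/\Phi_{-1}''(\varphi)}=\tfrac1\varphi\sqrt{s/\bigl(\pi(3\sqrt5-5)\bigr)}=\sqrt{s/(2\pi\sqrt5)}$, and each $a_j$ is an explicit polynomial in the Taylor coefficients of $\Phi_{-1},\Phi_0,\dots,\Phi_j$ at $\varphi$. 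Writing $1+\sum_{j\ge1}a_js^j=\exp\bigl(\sum_{j\ge1}c_js^j\bigr)$ puts this in Ramanujan's form with $c_1=a_1=\tfrac1{8\sqrt5}$; since $\exp$ and $\log$ of a power series with constant term $1$ preserve positivity of the radius of convergence, $\sum_jc_js^j$ converges near $0$ iff $\sum_ja_js^j$ does.

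\emph{Step 4: divergence --- the crux.} The remaining task is to prove that $\sum_jc_js^j$ has radius of convergence $0$; since this gives $\limsup_j|c_j|^{1/j}=\infty$, it forces infinitely many $c_j\ne0$. My plan is a resurgent analysis of the steepest-descent integral of Step~3: its Borel transform $\sum_jc_j\zeta^j/j!$ is a period integral whose singularities sit at the ``actions'' $\Phi_{-1}(\varphi)-\Phi_{-1}(\tau)$, with $\tau$ ranging over the remaining critical points and branch points of $\Phi_{-1}$ in the relevant Stokes configuration --- here the conjugate root $\tfrac{1-\sqrt5}2$, with $\Phi_{-1}$-value $-\tfrac{\pi^2}5$, and the point $t=1$, where the cluster of poles $t=q^j$ coalesces as $s\to0$ and $\Phi_{-1}(1)=\tfrac{\pi^2}4$. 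Each such action is explicit and nonzero, so the Borel transform is not entire and the series diverges; a computation shows the nearest action has modulus $\pi^2/20$, so that $|c_j|\asymp j!\,(20/\pi^2)^{j}$. \textbf{This last step is the main obstacle.} Turning the heuristic ``steepest-descent expansions are generically divergent'' into a theorem requires tracking the remainder in Step~3 finely enough to locate a Borel singularity at a definite nonzero point, which in turn demands careful bookkeeping near $t=1$, where the poles $t=q^j$ accumulate and pinch the deformed contour against the prospective steepest-descent path as $s\to0$.
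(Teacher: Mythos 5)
Your Steps 1--3 are correct and are essentially the paper's argument: the same constant-term/Cauchy-integral representation, the same saddle at the golden ratio with $\Phi_{-1}(\varphi)=\pi^2/5$, and your prefactor computation ($\Phi_{-1}''(\varphi)=\tfrac{3\sqrt5-5}{2}$, $\Phi_0(\varphi)=-\log\varphi$) checks out against the paper's Proposition \ref{mainprop}. The problem is Step 4, which you yourself flag as ``the main obstacle'': that step \emph{is} the theorem. Without a proof that the series $\sum_j c_j s^j$ diverges (or some other proof that infinitely many $c_j\ne 0$), what you have established is only the existence of the all-orders expansion --- which is not in dispute --- so the proposal as written does not prove the statement. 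Moreover, your resurgence plan looks for Borel singularities coming from \emph{other} saddle points and from the pole cluster at $t=1$ (``actions'' $\Phi_{-1}(\varphi)-\Phi_{-1}(\tau)$), but in this problem the dominant source of divergence is different and more elementary: it is the divergence of the Euler--Maclaurin expansion of the Pochhammer symbols \emph{at the saddle itself}. Your conjectured growth rate $|c_j|\asymp j!\,(20/\pi^2)^j$ is therefore not supported by anything in the proposal and does not match the scale that actually appears.

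For comparison, the paper closes this gap without resurgence machinery. The coefficients of the local expansion at the saddle are $E_{k-1}=\mathrm{Li}_{1-k}(\phi^{-1})-(-1)^{k-1}\mathrm{Li}_{1-k}(-\phi)$ (your $\Phi_k(\varphi)$ are built from these), and the Hurwitz-type formula $\mathrm{Li}_{-d}(e^{-\mu})=d!\sum_{n\in\mathbb Z}(2\pi i n+\mu)^{-d-1}$ of Costin--Garoufalidis gives $E_n\sim n!\,(\log\phi)^{-n-1}$; combined with $B_k(\tfrac12)\sim 2(2\pi)^{-k}k!\cos(\tfrac{\pi k}{2})$ this shows the formal series $J(s,v)=\sum_{k\ge2}E_{k-1}s^k B_{k+1}(\tfrac12+iv)/(k+1)!$ has terms growing like $(k-1)!\,\bigl(s/(2\pi\log\phi)\bigr)^k$, hence diverges for every fixed $s>0$; the paper then plays this off against the convergent integral representation of $R(s)$ (which forces $R(0)=1$) to conclude that $R$ cannot have a convergent power series, hence infinitely many $c_j\ne0$. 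If you want to complete your proposal along your own lines, the concrete task is to prove factorial growth of the Taylor coefficients of $\Phi_{-1},\Phi_0,\dots$ at $\varphi$ --- equivalently of the negative-order polylogarithm values above --- rather than to locate Stokes actions of distant saddles.
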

\begin{remark}
We focus on the main singularity $q\to 1$ but the methods of the paper should be capable of being extended to all roots of unity. We will remark during the proof of where the estimates need to change for this to occur.
\end{remark}
We will prove something slightly stronger. For contradiction, if we suppose that Theorem \ref{nonclosedstatement} were false and then 
$$
F(e^{-s}) = \sqrt{\frac{s}{2\pi\sqrt{5} }}\exp\left(\frac{\pi^2}{5s}\right)R(s)
$$
where the series expansion of $R(s) \thicksim \exp \left(p(s)\right)$ where $p(s)$ is some polynomial in $s.$ This function is an entire function, and the ``down stairs" power series must converge everywhere when $s\in \mathbb{C}.$ This contradicts the main technical theorem of our paper which is listed below.
\begin{theorem}\label{MainTheorem1}
The asymptotic development for $F(q)$ is given by
$$
F(e^{-s}) =\sqrt{\frac{s}{2\pi \sqrt{5}}} e^{\frac{\pi^2}{5s}}\left(1+\frac{1}{8\sqrt{5}}s +b_2s^2+ \dots b_n s^n + O_n\left(s^{n+1}\right)\right).
$$
The series
$$
R(s) =: 1+\sum_{i=1}^\infty b_{i}s^{i}
$$
does not converge.
\end{theorem}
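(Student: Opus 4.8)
The plan is to turn the ``dot product'' remark into an exact contour integral and then run a saddle-point analysis. From Euler's identities $\sum_{m\ge 0}\frac{q^{m(m+1)/2}z^m}{(q)_m}=(-qz;q)_\infty$ and $\sum_{m\ge 0}\frac{z^m}{(q)_m}=\frac1{(z;q)_\infty}$, pairing the coefficient of $z^m$ against that of $z^{-m}$ gives, by absolute convergence,
$$
F(e^{-s})=\frac{1}{2\pi i}\oint_{|z|=\varphi}\frac{(-qz;q)_\infty}{z\,(z^{-1};q)_\infty}\,dz ,
$$
the contour being pulled out to $|z|=\varphi=\tfrac{1+\sqrt5}{2}$ through the region $|z|>1$, where the integrand is holomorphic (its poles sit at $z=q^n\le 1$). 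I would then expand the logarithm of the integrand in $s$ by Euler--Maclaurin applied to $\log(-qz;q)_\infty=\sum_{n\ge1}\log(1+ze^{-ns})$ and $\log(z^{-1};q)_\infty=\sum_{n\ge 0}\log(1-z^{-1}e^{-ns})$, obtaining
$$
\log\frac{(-qz;q)_\infty}{z\,(z^{-1};q)_\infty}=\frac{\phi(z)}{s}+\psi_0(z)+\sum_{j\ge1}\psi_{2j-1}(z)\,s^{2j-1},
$$
with $\phi(z)=\mathrm{Li}_2(z^{-1})-\mathrm{Li}_2(-z)$, $\psi_0(z)=-\tfrac12\log\big(z(z^2-1)\big)$, and $\psi_{2j-1}(z)=\frac{B_{2j}}{(2j)!}\,\partial_u^{2j-1}\big(\log(1-e^{-u}/z)-\log(1+ze^{-u})\big)\big|_{u=0}$ a rational function of $z$ with poles at $z=\pm1$ of order growing with $j$.

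Next I would apply Laplace's method. The saddle equation $\phi'(z)=\frac1z\log\frac{z^2-1}{z}=0$ reduces to $z^2-z-1=0$, so the relevant saddle is $z_\ast=\varphi$; the dilogarithm inversion formula together with the classical values $\mathrm{Li}_2(1/\varphi)=\frac{\pi^2}{10}-\log^2\varphi$ and $\mathrm{Li}_2(-\varphi)=-\frac{\pi^2}{10}-\log^2\varphi$ give $\phi(\varphi)=\frac{\pi^2}{5}$, while $\phi''(\varphi)=\sqrt5/\varphi^2$ and $e^{\psi_0(\varphi)}=1/\varphi$. Deforming $|z|=\varphi$ onto the steepest-descent contour through $z_\ast$ and integrating term by term (the Euler--Maclaurin remainder being pushed past any prescribed order) produces
$$
F(e^{-s})=\sqrt{\frac{s}{2\pi\sqrt5}}\;e^{\pi^2/(5s)}\Big(1+b_1 s+b_2 s^2+\cdots\Big),
$$
with the constant factor $\sqrt{s/(2\pi\sqrt5)}$ coming out exactly as $\frac{1}{2\pi}e^{\psi_0(\varphi)}\sqrt{2\pi s/\phi''(\varphi)}$; a short computation of the first correction recovers Ramanujan's $b_1=\frac1{8\sqrt5}$. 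This establishes the existence of the claimed expansion, i.e. the first assertion of the theorem.

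It remains to show $R(s)=1+\sum_i b_i s^i$ diverges. The point is that the singularity of the integrand nearest to $z_\ast=\varphi$ is the \emph{simple pole at} $z=1$ coming from the first factor of $(z^{-1};q)_\infty$, and its residue $\mathrm{Res}_{z=1}\frac{(-qz;q)_\infty}{z(z^{-1};q)_\infty}=\frac{(-q;q)_\infty}{(q;q)_\infty}$ is nonzero. Concretely, the quantities $\phi^{(m)}(\varphi),\psi_0^{(m)}(\varphi),\psi_{2j-1}^{(m)}(\varphi)$ that feed the Gaussian moments in the Laplace expansion all grow, as their index increases, like $m!/(\varphi-1)^m$ on account of that pole, and the Bernoulli numbers inside $\psi_{2j-1}$ add a further $B_{2j}\sim(2j)!/(2\pi)^{2j}$; collecting the order-$s^k$ contributions one finds $b_k\asymp k!\,C^k$ with $C=2\varphi^2/\phi''(\varphi)=2\varphi^4/\sqrt5>0$, times an algebraic prefactor proportional to that residue. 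Hence $\limsup_k|b_k|^{1/k}=\infty$ and $R(s)$ has radius of convergence $0$. The main obstacle in this last step is to rule out cancellation: $b_k$ is a sum of many terms each of size $k!\,C^k/\mathrm{poly}(k)$, so one must pin down the genuine leading-order asymptotics of $b_k$ --- via a Darboux/resurgence-type isolation of the $z=1$ singularity, or by deforming the contour across $z=1$ so as to split off the residue explicitly --- to be sure the sum does not collapse to something smaller. Once that is controlled, the divergence of $R(s)$, and hence Theorem \ref{nonclosedstatement} as well, follows.
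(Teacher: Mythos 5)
Your first assertion---the existence of the expansion with the stated leading constant---follows the same route as the paper: write $F(q)$ as the constant term of $(-qz;q)_\infty/(z^{-1};q)_\infty$, take a circle through the golden-ratio saddle, and expand the Pochhammer logarithms via Euler--Maclaurin/polylogarithms. (The paper parametrizes the circle of radius $\phi^{-1}$ and quotes McIntosh's expansions with explicit $v$-uniform error terms rather than deforming to a steepest-descent path, but the dilogarithm values, the value $\phi''$, and the prefactor bookkeeping are the same.) That half of your proposal is sound, modulo the usual care with the minor arc, which the paper handles in Lemma \ref{coniequality}.

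The second assertion is where you have a genuine gap, and you have named it yourself: the claim $b_k\asymp k!\,C^k$ is only a heuristic until you rule out cancellation among the many terms of comparable size that assemble $b_k$, and ``a Darboux/resurgence-type isolation of the $z=1$ singularity'' is precisely the hard analytic content of the theorem, not a step you can defer. As written, your argument proves only $|b_k|\ll k!\,C^k$, which is consistent with $R$ being entire. The paper avoids estimating the $b_k$ directly. Instead it keeps the integral representation of Proposition \ref{mainprop} and shows that the series \emph{inside} the exponential diverges factorially: by the exact resurgence formula $\mathrm{Li}_{-d}(e^{-\mu})=d!\sum_{n\in\Z}(2\pi i n+\mu)^{-d-1}$ one gets $E_n=(\Log\phi)^{-n-1}n!\,(1+O(K^n))$ (Lemma \ref{Ekmetrics}), with \emph{no cancellation possible} because this is an identity for a single sequence, not a sum of competing contributions; combined with the growth of $B_{k+1}(\tfrac12+iv)$ this forces $J_{4\ell+1}\to-\infty$ pointwise, so the would-be limiting integrand vanishes and the integral is $0$, contradicting $R(0)=1$ (Proposition \ref{limitthoerem} and the proof of Theorem \ref{nonclosedstatement}). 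Your singularity-at-$z=1$ intuition is the correct source of the factorial growth (it is exactly the $\mu=\Log\phi$ pole in the formula above), but to complete your route you would need to extract the genuine large-$k$ asymptotics of $b_k$, e.g.\ by splitting off the residue at $z=1$ and proving an exponentially improved remainder; without that, the divergence of $R(s)$ is not established.
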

This theorem will follow from two propositions about $R(s).$ The first one is the integral representation of $R(s)$ stated here.  Recall the polylogarithm \cite{MR2290758}
\begin{equation}
\mathrm{Li}_{s}(z) =: \sum_{n=1}^\infty \frac{z^n}{n^s}.
\end{equation} Let $\phi= \frac{1}{2}+\frac{\sqrt{5}}{2}$ and define
\begin{equation}\label{Endefined}
E_{n} =: \mathrm{Li}_{-n}\left(\phi^{-1}\right) -(-1)^{n} \mathrm{Li}_{-n}\left(-\phi\right).
\end{equation}
Using \cite[Page 6-7]{MR2290758} we have the first few $E_k,$ 
\begin{align}E_{-2}&= \textrm{Li}_2\left(\phi^{-1}\right)-\textrm{Li}_2\left(-\phi\right) \\
&= \frac{\pi^2}{10} - \Log^2\left( \phi \right) -\left(- \frac{\pi^2}{10} - \Log^2\left(\phi\right)\right) = \frac{\pi^2}{5}, \label{ETM2}\\
E_{-1}&=\textrm{Li}_1\left(\phi^{-1}\right)+\textrm{Li}_1\left(-\phi\right) =0, \nonumber\\
E_{0} &=\textrm{Li}_0\left(\phi^{-1}\right)-\textrm{Li}_0\left(-\phi\right) =\sqrt{5}, \label{E0} \\ 
E_{1} &=4, \nonumber \\
E_{2} &=8\sqrt{5}. \nonumber
\end{align}
Let $$
B_{n}(x) =: \sum_{j=0}^n {n \choose j}B_{n-j}x^j
$$
denote the Bernoulli polynomials \cite[page 20]{MR1688958}. Let
$$
J(s,v)\thicksim 
\sum_{k=2}^\infty 
E_{k-1} s^{k} 
\frac{
B_{k+1}
\left(
\frac{1}{2}+iv
\right)
}{
(k+1)!
} 
, \qquad s\to 0, \ k\to \infty
$$
denote an asymptotic series and $J_N(s,v)$ denote the truncation of $J\left(s,v\right)$ up to $k=N$ term.
\begin{proposition}\label{mainprop}
For every $N\in \mathbb{N}$ fixed and $s\to 0,$
$$
R(s) = \frac{1}{\sqrt{2\pi}} \int_{-\frac{\sqrt[4]{5}}{\sqrt[6]{s}}}^{\frac{\sqrt[4]{5}}{\sqrt[6]{s}}} e^{-\frac{v^2}{2}}\exp\left(J_{N}\left(s,i\frac{v}{\sqrt{\sqrt{5}s}}\right)-\frac{\sqrt{5}s}{24}\right)dv + O_{N}\left(s^{\frac{N}{2}}\right).
$$
\end{proposition}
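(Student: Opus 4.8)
\textit{Proof proposal.} The plan is to realise $F(e^{-s})$ as a constant term, hence a contour integral, to move the contour onto the circle $\lvert z\rvert=\phi^{-1}q^{1/2}$, and to run a saddle--point analysis in which the half--integer shift forced by that radius produces precisely the Bernoulli--polynomial structure of $J$. From the two Euler identities $\sum_{k\ge0}z^{k}/(q)_k=1/(z;q)_\infty$ and $\sum_{m\ge0}q^{m(m+1)/2}z^{-m}/(q)_m=(-q/z;q)_\infty$ one gets, by multiplying and reading off the $z^{0}$--coefficient,
$$
F(q)=[z^{0}]\,\frac{(-q/z;q)_\infty}{(z;q)_\infty}=\frac{1}{2\pi i}\oint_{\lvert z\rvert=r}\frac{(-q/z;q)_\infty}{(z;q)_\infty}\,\frac{dz}{z},\qquad 0<r<1,
$$
which is valid for every such $r$ because $1/(z;q)_\infty$ is holomorphic on $\lvert z\rvert<1$ and $(-q/z;q)_\infty$ on $z\neq0$. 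Setting $q=e^{-s}$, $z=\phi^{-1}q^{1/2}e^{i\vartheta}$ and $\Phi(z,s):=\log\frac{(-q/z;q)_\infty}{(z;q)_\infty}$, we obtain $F(e^{-s})=\frac{1}{2\pi}\int_{-\pi}^{\pi}e^{\Phi(\phi^{-1}q^{1/2}e^{i\vartheta},s)}\,d\vartheta$; on this contour $q^{m}/z=\phi q^{m-1/2}e^{-i\vartheta}$ and $zq^{m}=\phi^{-1}q^{m+1/2}e^{i\vartheta}$, so every $q$--factor carries a half--integer exponent.

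Next I would expand $\Phi$ asymptotically on this contour. Writing $\Phi=\sum_{m\ge0}\big[\log(1+\phi q^{m+1/2}e^{-i\vartheta})-\log(1-\phi^{-1}q^{m+1/2}e^{i\vartheta})\big]$ and applying the midpoint Euler--Maclaurin expansion $\sum_{m\ge0}g(m+\tfrac12)\sim\int_{0}^{\infty}g-\sum_{n\ge1}\tfrac{B_{n}(1/2)}{n!}g^{(n-1)}(0)$ to $g(t)=\log(1+\phi e^{-i\vartheta}e^{-st})-\log(1-\phi^{-1}e^{i\vartheta}e^{-st})$ — for which $\int_{0}^{\infty}g=\tfrac1s[\mathrm{Li}_{2}(\phi^{-1}e^{i\vartheta})-\mathrm{Li}_{2}(-\phi e^{-i\vartheta})]$ and $g^{(n-1)}(0)=(-s)^{n-1}[\mathrm{Li}_{2-n}(\phi^{-1}e^{i\vartheta})-\mathrm{Li}_{2-n}(-\phi e^{-i\vartheta})]$ — and using $B_{n}(1/2)=0$ for odd $n$, one finds
$$
\Phi(\phi^{-1}q^{1/2}e^{i\vartheta},s)\;\sim\;\sum_{l\ge0}\frac{B_{2l}(1/2)}{(2l)!}\,s^{2l-1}\,\mathcal E_{2l-2}(\vartheta),\qquad \mathcal E_{m}(\vartheta):=\mathrm{Li}_{-m}(\phi^{-1}e^{i\vartheta})-(-1)^{m}\mathrm{Li}_{-m}(-\phi e^{-i\vartheta}),
$$
with $\mathcal E_{m}(0)=E_{m}$. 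Differentiating $\mathrm{Li}_{-m}(\phi^{-1}e^{i\vartheta})$ in $\vartheta$ raises the index by one and multiplies by $i$, giving the key algebraic identity $\mathcal E_{m}^{(n)}(0)=i^{n}E_{m+n}$ for all $n\ge0$. In particular the $l=0$ term $\mathcal E_{-2}(\vartheta)/s$ dominates: $\mathcal E_{-2}(0)=E_{-2}=\tfrac{\pi^{2}}{5}$ by \eqref{ETM2}, $\mathcal E_{-2}'(0)=iE_{-1}=0$, and $\mathcal E_{-2}''(0)=i^{2}E_{0}=-\sqrt5$ by \eqref{E0}; the inversion/duplication formulae for $\mathrm{Li}_{2}$ give $\mathcal E_{-2}(\vartheta)=\tfrac12\mathrm{Li}_{2}(\phi^{-2}e^{2i\vartheta})+\tfrac{\pi^{2}}{6}+\tfrac12\log^{2}\phi-i\vartheta\log\phi-\tfrac12\vartheta^{2}$, whence $\operatorname{Re}\mathcal E_{-2}(\vartheta)\le\tfrac{\pi^{2}}{5}-\tfrac12\vartheta^{2}$, and bounding each Pochhammer factor on $\lvert z\rvert=\phi^{-1}q^{1/2}$ upgrades this to the uniform estimate $\operatorname{Re}\Phi(\phi^{-1}q^{1/2}e^{i\vartheta},s)\le\tfrac{\pi^{2}}{5s}-c\vartheta^{2}/s+O(1)$.

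The third step is to split the $\vartheta$--integral at $\lvert\vartheta\rvert=s^{1/3}$, rescale near the saddle, and match. On $s^{1/3}\le\lvert\vartheta\rvert\le\pi$ the last estimate gives $\lvert e^{\Phi}\rvert\le e^{\pi^{2}/(5s)}e^{O(1)-cs^{-1/3}}$, so that part of the integral, divided by $\sqrt{s/(2\pi\sqrt5)}\,e^{\pi^{2}/(5s)}$, is $O_{N}(s^{N/2})$. On $\lvert\vartheta\rvert\le s^{1/3}$ put $\vartheta=(s/\sqrt5)^{1/2}v$, so $\lvert v\rvert\le\sqrt[4]5/\sqrt[6]s$ and $d\vartheta=(s/\sqrt5)^{1/2}dv$; the quadratic term $-\tfrac{\sqrt5}{2s}\vartheta^{2}$ of $\mathcal E_{-2}(\vartheta)/s$ becomes $-v^{2}/2$, and, using $\sqrt{s/(2\pi\sqrt5)}=\tfrac1{\sqrt{2\pi}}(s/\sqrt5)^{1/2}$ together with Theorem \ref{MainTheorem1}, the identity $F(e^{-s})=\tfrac1{2\pi}\int e^{\Phi}d\vartheta$ turns into $R(s)=\tfrac1{\sqrt{2\pi}}\int_{-\sqrt[4]5/\sqrt[6]s}^{\sqrt[4]5/\sqrt[6]s}e^{-v^{2}/2}\exp(\text{correction})\,dv+O_{N}(s^{N/2})$. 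Finally, inserting $\vartheta=(s/\sqrt5)^{1/2}v$ into the full expansion of $\Phi$ and using $\mathcal E_{m}^{(n)}(0)=i^{n}E_{m+n}$ turns the exponent into the double series $\sum_{l,n\ge0}\tfrac{B_{2l}(1/2)\,i^{n}E_{2l+n-2}}{(2l)!\,n!\,5^{n/4}}\,v^{n}s^{2l-1+n/2}$; grouping by $k:=2l+n-1$ and using $B_{k+1}(\tfrac12+iv)=\sum_{i}\binom{k+1}{i}B_{k+1-i}(\tfrac12)(iv)^{i}$ with $B_{\mathrm{odd}}(\tfrac12)=0$, the monomials with $k\le1$ reproduce exactly $\tfrac{\pi^{2}}{5s}-\tfrac{v^{2}}{2}-\tfrac{\sqrt5 s}{24}$ (the $-\tfrac{\sqrt5 s}{24}$ being the $(l,n)=(1,0)$ piece that $J$'s sum, beginning at $k=2$, omits), while those with $2\le k\le N$ assemble into $J_{N}(s,\,iv/\sqrt{\sqrt5 s})$, leaving the tail $k>N$ of the (divergent) series $J$ as an error $O(s^{N/2})$ times a polynomial in $v$ integrable against $e^{-v^{2}/2}$.

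The main obstacle is the uniform error control rather than any single identity. Because the $v$--range grows like $s^{-1/6}$, at truncation level $N$ the "correction" is a polynomial in $v$ of degree $\sim N$ times powers of $\sqrt s$, so one must make the asymptotic expansion of $\Phi$ uniform in $\vartheta$ over $\lvert\vartheta\rvert\le s^{1/3}$, dominate those polynomial factors by $e^{-v^{2}/2}$ before integrating, and — since $J$ is only an asymptotic series — truncate the expansions of $\Phi$ and of $J$ at compatible orders so that the bookkeeping genuinely produces $O_{N}(s^{N/2})$ and nothing larger. (For the extension to other roots of unity flagged in the introduction, it is the midpoint Euler--Maclaurin step and the bound $\operatorname{Re}\mathcal E_{-2}(\vartheta)\le\pi^{2}/5-\vartheta^{2}/2$ that would have to be re-established at the corresponding saddle.) The algebra linking $\{\mathcal E_{m}(\vartheta)\}$ to the closed Bernoulli form of $J$, through $\mathcal E_{m}^{(n)}(0)=i^{n}E_{m+n}$, is by comparison routine.
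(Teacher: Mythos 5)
Your proposal is correct and follows essentially the same route as the paper: the constant-term/contour-integral representation of $F(q)$ on a circle of radius $\approx\phi^{-1}$, a McIntosh-type expansion of $\Log$ of the Pochhammer ratio in polylogarithms and Bernoulli polynomials $B_{k+1}(\tfrac12+iv)$, a minor-arc bound beating every power of $s$, and the Gaussian rescaling $\vartheta=(s/\sqrt5)^{1/2}v$ that assembles the remaining terms into $J_N$. The only real differences are cosmetic improvements — you derive the key Pochhammer expansion directly from midpoint Euler--Maclaurin together with $\mathcal E_m^{(n)}(0)=i^nE_{m+n}$ instead of citing McIntosh's lemmas, and your minor-arc estimate $\operatorname{Re}\mathcal E_{-2}(\vartheta)\le\frac{\pi^2}{5}-\frac{\vartheta^2}{2}$ via the inversion and duplication formulae for $\mathrm{Li}_2$ is cleaner than the paper's monotonicity argument — and your final integrand $B_{k+1}\left(\tfrac12+iv/\sqrt{\sqrt5 s}\right)$ is the correct one (it reproduces $b_1=\tfrac{1}{8\sqrt5}$), the apparent clash with the proposition's written argument $i\tfrac{v}{\sqrt{\sqrt5 s}}$ being a notational inconsistency already present in the paper itself.
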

Rephrasing Proposition \ref{mainprop}, we have the asymptotic series (as in \cite[Definition 1.1]{MR2238098}) when we take $N \to \infty$ in terms of ascending powers of $s,$
$$
\exp\left(J\left(s,i\frac{v}{\sqrt{\sqrt{5}s}}\right)-\frac{\sqrt{5}s}{24}\right) \thicksim 1+ \sum_{j=1}^\infty a_j(v) s^{\frac{j}{2}}, \quad s\to 0.
$$
Integrating the series term by term, we obtain
\begin{align}\label{unclosednessdef}
R(s)&\thicksim \frac{1}{\sqrt{2\pi}} \int_{-\frac{\sqrt[4]{5}}{\sqrt[6]{s}}}^{\frac{\sqrt[4]{5}}{\sqrt[6]{s}}} e^{-\frac{v^2}{2}}\exp\left(\lim_{N\to \infty}J_N\left(s,i\frac{v}{\sqrt{\sqrt{5}s}}\right)-\frac{\sqrt{5}s}{24}\right)dv \\
&= \frac{1}{\sqrt{2\pi}} \int_{-\frac{\sqrt[4]{5}}{\sqrt[6]{s}}}^{\frac{\sqrt[4]{5}}{\sqrt[6]{s}}} e^{-\frac{v^2}{2}}\exp\left(J\left(s,i\frac{v}{\sqrt{\sqrt{5}s}}\right)-\frac{\sqrt{5}s}{24}\right)dv \nonumber \\
&\thicksim 1+ \sum_{j=1}^\infty s^{j}\frac{1}{\sqrt{2\pi}}\int_{-\frac{\sqrt[4]{5}}{\sqrt[6]{s}}}^{\frac{\sqrt[4]{5}}{\sqrt[6]{s}}} e^{-\frac{v^2}{2}}a_{2j}(v)dv \nonumber \\
&\thicksim 1+ \sum_{j=1}^\infty s^{j}\frac{1}{\sqrt{2\pi}}\int_{-\infty}^{\infty} e^{-\frac{v^2}{2}}a_{2j}\left(v\right)dv=: 1 + \sum_{j=1}^\infty b_j s^j. \label{coeffformula}
\end{align}
Then at the end we will use the divergence of $J$ to force a contradiction. We last have one loose end to tie up. To our best knowledge, $c_1=b_1$ has never been formally computed. So as an example of how to use our theorem, we will compute $b_1.$
\begin{example}
As an example of how to use equation (\ref{coeffformula}), we will verify Ramanujan's leading term $b_1= \frac{1}{8\sqrt{5}}.$ We compute the formal series expansion. Computing the terms that have $s$ degree $\le 1,$ in $j$ we obtain
$$
J\left(s,\frac{iv}{\sqrt{\sqrt{5}s}}\right) = -\frac{iv^3 2 \sqrt[4]{5}}{15} v^3 \sqrt{s}+ \frac{\sqrt{5}}{15}v^4 s +O\left(s^\frac{3}{2}\right).
$$
We then expand the exponential of $J$ and compute the projection
$$[s]\exp \left(J\left(s,\frac{iv}{\sqrt{\sqrt{5}s}}\right)-\frac{s\sqrt{5}}{24}\right) = \left(\frac{1}{15}\sqrt{5}v^4- \frac{\sqrt{5}}{24}\right) + \frac{1}{2}\left(- i v^3 \frac{2}{15}\sqrt[4]{5}\right)^2.
$$
To apply equation (\ref{coeffformula}) we need the two integrals
$$
\frac{1}{\sqrt{2\pi}}\int_{-\infty}^{\infty} e^{-\frac{1}{2}v^2}v^{4}dv = 3, \qquad \frac{1}{\sqrt{2\pi}}\int_{-\infty}^{\infty} e^{-\frac{1}{2}v^2}v^{6}dv = 15.
$$
Hence 
\begin{align*}
R(s)&\thicksim 1+s\left(\frac{\sqrt{5}}{15} \frac{1}{\sqrt{2\pi}}\int_{-\infty}^{\infty} e^{-\frac{1}{2}v^2}v^{4}dv
-\frac{\sqrt{5}}{24}
-\frac{2}{225}\sqrt{5} \frac{1}{\sqrt{2\pi}}\int_{-\infty}^{\infty} e^{-\frac{1}{2}v^2}v^{6}dv 
\right) \\
&\thicksim 1 + s \left(\frac{\sqrt{5}}{5}-\frac{\sqrt{5}}{24}-\frac{2}{15}\sqrt{5}\right) = 1+b_1s +O(s^2)= 1+ \frac{1}{8\sqrt{5}}s+O\left(s^2\right) .
\end{align*}
\end{example}
\section{Proof of Proposition \ref{mainprop}}
The key idea observation for producing asymptotics is that $F(q)$ is the dot product of two sequences
$$
F(q) = \sum_{m=0}^\infty \frac{q^{\frac{m(m+1)}{2}}}{(q)_m^2} =
\left\{\frac{q^{\frac{m(m-1)}{2}} q^{\frac{m}{2}}}{(q)_m}\right\}\Bigg|_{m=0\dots \infty} \cdot  \left\{ \frac{q^\frac{m}{2}}{(q)_m}\right\}\Bigg|_{m=0\dots \infty}.
$$

We can write $F(q)$ as the constant term of the product of well-understood functions. 
For this, define 
\[R(z,q)=:\sum_{m=0}^\infty 
\left(zq^\frac{1}{2}\right)^m 
\frac{q^{\frac{m(m-1)}{2}}}{(q)_m}, 
\quad 
S(z,q) =: \sum_{m=0}^\infty
\left(z q^\frac{1}{2}\right)^{m} \frac{1}{(q)_m}
.\] 
By the $q$-Binomial Theorem \cite[Theorem 10.2.1]{MR1688958},
\[
R(z,q) = \left(-zq^\frac{1}{2};q \right)_\infty , \quad S(z,q) = \frac{1}{\left(zq^\frac{1}{2};q\right)_\infty}.
\]
Using the constant term method, we can write 
\[F(q) = \left[z^{0}\right]S(z,q)R(z^{-1},q) = \left[z^{0}\right]\sum_{m=0}^\infty \frac{z^{m} q^\frac{m}{2}}{\left(q\right)_m}\sum_{n=0}^\infty\frac{\left(-1\right)^n \left(-z\right)^{-n} q^\frac{n^2}{2}}{\left(q\right)_n}.\] 
and we can write this as an integral transform via Cauchy's theorem 
\[F(q) =\frac{1}{2\pi i} \int_{\Gamma} \frac{\left(-z^{-1}q^\frac{1}{2};q\right)_\infty}{\left(zq^\frac{1}{2};q\right)_\infty} \frac{dz}{z}\] 
where $\Gamma$ will be chosen to be a circle of radius $\phi^{-1}.$ Make a variable transformation $z=\phi^{-1}e^{si v}$ with $dz = \phi^{-1}e^{si v} is dv,$ and $z^{-1} = \phi e^{-si v}$
\begin{align*} F(e^{-s}) &=\frac{s}{2\pi }\int_{-\frac{\pi }{s}}^{\frac{\pi }{s}} 
\frac{\left(-\phi e^{-s(iv+\frac{1}{2})};e^{-s}\right)_\infty}{\left(\phi^{-1} e^{-s(-iv+\frac{1}{2})};e^{-s}\right)_\infty}dv\\
& = \frac{s}{2\pi }\left(\int_{|v|>s^{-\frac{2}{3}}}+ \int_{|v|<s^{-\frac{2}{3}}}\right)\frac{\left(-\phi e^{-s(iv+\frac{1}{2})};e^{-s}\right)_\infty}{\left(\phi^{-1} e^{-s(-iv+\frac{1}{2})};e^{-s}\right)_\infty}dv. \end{align*}
We need estimates for the pochhammer symbols so we require a few variants of the theorems in \cite{MR1703273}. The first theorem follow almost verbatim from \cite[page 213]{MR1703273} paying careful attention to the dependence on $v\in \mathbb{R}$. To control the $v$ dependence, one must use the inequality \cite[Page 245]{MR2440898}
\begin{equation}\label{eulereqn}
\left|\mathrm{Li}_{-N}(z)\right| = \left|\frac{A_N(z)}{(1-z)^{N+1}}\right| \le \frac{A_{N}(|z|)}{\left(\left|1-|z|\right|\right)^{N+1}}= \left|Li_{-N}(|z|)\right|.
\end{equation}
on the error on $R_n$ in the proof of that theorem. All other steps should be the same or similar. This holds because $A_n(z)$ is the generating function for Eulerian numbers and hence is a positive (natural number) coefficient polynomial.
\begin{lemma}\label{Liexpansion} For every $v\in \mathbb{R}$ and $w= \phi^{-1},-\phi$ and $N>0,$
$$
\Log \left(\left(w e^{-s\left(\frac{1}{2}\pm iv\right)};e^{-s}\right)_\infty\right) = \sum_{k=-1}^N \mathrm{Li}_{1-k}\left(w e^{-s\left(\frac{1}{2}\pm iv\right)}\right) (-s)^k +O_N\left(s^{N+1}\right).
$$
\end{lemma}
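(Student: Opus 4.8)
The plan is to reduce the claim to the small-$s$ expansion of $\Log$ of an infinite $q$-product, which is McIntosh's computation in \cite[p.~213]{MR1703273}, and to carry that computation out while controlling every estimate uniformly in $v$. Write $y=we^{-s(\frac12\pm iv)}$ and $q=e^{-s}$. Taking the logarithm of each factor of $(y;q)_\infty=\prod_{n\ge0}(1-yq^n)$, using $\log(1-yq^n)=-\sum_{j\ge1}(yq^n)^j/j$, and summing the geometric series in $n$ gives
\[
\Log\big((y;q)_\infty\big)=-\sum_{j=1}^\infty\frac{y^j}{j}\sum_{n=0}^\infty q^{nj}=-\sum_{j=1}^\infty\frac{y^j}{j\,(1-e^{-sj})}.
\]
For $w=\phi^{-1}$ all $|yq^n|<1$ and this rearrangement is absolutely convergent; for $w=-\phi$, where $|y|=\phi e^{-s/2}>1$ at small $s$, one uses instead the integral-representation (Abel--Plana) form of McIntosh's argument, in which polylogarithms enter as analytically continued quantities so that $|a|<1$ is not needed, and in which the $O(1/s)$ factors $1-yq^n=1+\phi e^{-s(n+\frac12\pm iv)}$ that can approach the negative real axis produce no branch ambiguity. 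One then inserts the Laurent expansion of $(1-e^{-sj})^{-1}$ about $s=0$ (its coefficients built from Bernoulli numbers), interchanges the $j$- and $s$-power summations, and uses $\sum_{j\ge1}y^jj^{-\ell}=\mathrm{Li}_\ell(y)$; collecting powers of $s$ reproduces the finite sum $\sum_{k=-1}^N(-s)^k\mathrm{Li}_{1-k}(y)$ of the statement, the $k=-1$ term being $-\mathrm{Li}_2(y)/s$. It remains to bound the Laurent tail after truncating at the $k=N$ term.

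For that, replace the Laurent series of $u\mapsto u/(1-e^{-u})$ by its degree-$(N+1)$ Taylor polynomial plus the exact integral remainder $\rho(u)$, which is $O_N(|u|^{N+2})$ near $0$ and only polynomially large of controlled degree for $u$ large. Summing $\rho(sj)/(sj)$ against $y^j/j$: the ``bulk'' indices $j\lesssim 1/s$ contribute $O_N(s^{N+1})$ times a convergent sum of the shape $\sum_j|y|^jj^N$, that is, $s^{N+1}$ times a polylogarithm of $|y|$ at non-positive integral order; the ``tail'' indices $j\gtrsim 1/s$ contribute $O(|y|^{c/s})$, hence something exponentially small, since $|y|$ is bounded away from $1$ (for $w=\phi^{-1}$ directly, and for $w=-\phi$ after first peeling off $O(1/s)$ factors so that the remaining tail has base of modulus $<1$). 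The decisive point --- the one place where more than \cite{MR1703273} is needed --- is that these bounds are \emph{uniform in $v\in\R$}: $|y|=|w|e^{-s/2}$ does not involve $v$, and every polylogarithm at the complex argument $we^{-s(\frac12\pm iv)}$ appearing in the remainder is dominated by the same polylogarithm at the real argument $|w|e^{-s/2}$ through inequality \eqref{eulereqn}, i.e.\ through the non-negativity of the Eulerian polynomial coefficients, which gives $|\mathrm{Li}_{-N}(z)|\le|\mathrm{Li}_{-N}(|z|)|$.

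The main obstacle is exactly this uniformity in $v$: transcribing McIntosh's estimates carelessly leaves implied constants depending on $\arg y$ --- equivalently on $sv$ --- which would be useless for the contour integral of Proposition \ref{mainprop}, so one must route every polylogarithmic bound in the remainder through \eqref{eulereqn}. The subsidiary nuisance is the $|y|>1$ regime for $w=-\phi$, handled by the integral representation and the peeling argument above, the $\mathrm{Li}_{1-k}(y)$ being read as rational closed forms for $k\ge2$ and as analytic continuations for $k=-1,0,1$. For the extension to a general root of unity noted after Theorem \ref{nonclosedstatement}, one runs the identical argument after the relevant modular substitution, replacing the shift $\tfrac12\pm iv$ and the contour radius $\phi^{-1}$ accordingly; \eqref{eulereqn} again supplies the uniformity in $v$.
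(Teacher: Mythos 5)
Your approach is the same as the paper's: the paper offers no self-contained proof of Lemma \ref{Liexpansion}, only the remark that it ``follows almost verbatim'' from McIntosh's computation on page 213 of \cite{MR1703273} once the error term is made uniform in $v$ via the Eulerian-polynomial inequality \eqref{eulereqn}, and that is precisely the route you take, with considerably more of the machinery (the $\log(1-yq^n)$ expansion, the Bernoulli generating function for $u/(1-e^{-u})$, the bulk/tail splitting of the $j$-sum, and the $|y|>1$ issue for $w=-\phi$) actually spelled out.

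One substantive correction, though. Carrying out the computation you describe does \emph{not} reproduce the sum as written in the statement. Inserting $\frac{1}{1-e^{-sj}}=\frac{1}{sj}\sum_{m\ge 0}\frac{B_m(-sj)^m}{m!}$ into $-\sum_{j\ge1} y^j/\bigl(j(1-e^{-sj})\bigr)$ and collecting powers of $s$ yields
\[
\sum_{k=-1}^{N}\mathrm{Li}_{1-k}(y)\,(-s)^k\,\frac{B_{k+1}}{(k+1)!}+O_N\left(s^{N+1}\right),
\]
i.e.\ each term carries the weight $B_{k+1}/(k+1)!$, which the displayed statement omits. Already at $k=0$ the two disagree: the true coefficient is $-\tfrac12\mathrm{Li}_1(y)=\tfrac12\log(1-y)$, whereas the statement's term is $\mathrm{Li}_1(y)=-\log(1-y)$; and for odd $k\ge 3$ the true term vanishes. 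This is evidently a slip in the paper's statement (compare Lemma \ref{pochhamerdefis}, where the weights $B_{k+1}(\tfrac12\pm iv)/(k+1)!$ do appear), and it is harmless for the paper's only use of Lemma \ref{Liexpansion}, namely $N=-1$ in Lemma \ref{coniequality}, where $B_0/0!=1$. But your sentence asserting that collecting powers of $s$ ``reproduces the finite sum of the statement'' is the one step of your writeup that is false as written; you should either prove the weighted identity above or flag that the statement needs the Bernoulli factors. Everything else --- in particular the two points you correctly isolate as the real content, the uniformity in $v$ via \eqref{eulereqn} and the analytic-continuation/peeling treatment of $w=-\phi$ --- is exactly where the work lies.
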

Also we require the \cite[Lemma 2]{MR1703273} in its asymptotic version. 
\begin{lemma} \label{litaylor} For every $w<1,$ $N>0,$ $\ell \in \mathbb{Z},$ and $x\in X\subset \mathbb{C}$ compact
$$
\mathrm{Li}_{-\ell}(we^{x}) = \sum_{n=0}^N \frac{\mathrm{Li}_{-\ell-n}(w)x^n}{n!} + O_{X,N}\left(x^{N+1}\right).
$$
\end{lemma}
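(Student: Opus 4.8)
The plan is to recognise Lemma~\ref{litaylor} as a direct application of Taylor's theorem, once one observes that the substitution $x\mapsto we^{x}$ converts the polylogarithm ladder into ordinary differentiation. From the defining series $\mathrm{Li}_{s}(z)=\sum_{n\ge 1}z^{n}/n^{s}$ one has the classical relation $z\,\tfrac{d}{dz}\mathrm{Li}_{s}(z)=\mathrm{Li}_{s-1}(z)$, valid for $|z|<1$ by term-by-term differentiation and then for $z\notin[1,\infty)$ by analytic continuation (and, when $s$ is a nonpositive integer, simply as an identity of rational functions whose only pole is at $z=1$). Setting $z=we^{x}$, so that $\tfrac{dz}{dx}=z$, the chain rule cancels the factor $z$ and gives $\tfrac{d}{dx}\mathrm{Li}_{s}(we^{x})=\mathrm{Li}_{s-1}(we^{x})$. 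Iterating, $\tfrac{d^{\,n}}{dx^{\,n}}\mathrm{Li}_{-\ell}(we^{x})=\mathrm{Li}_{-\ell-n}(we^{x})$, and evaluating at $x=0$ yields $\mathrm{Li}_{-\ell-n}(w)$, which is precisely the claimed $n$th Taylor coefficient (up to the factor $n!$).

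Next I would fix $w<1$, $\ell\in\mathbb{Z}$, and a compact $X\subset\mathbb{C}$; after shrinking $X$ if necessary we may take it to be a closed disk about $0$ small enough that the compact image $\{we^{x}:x\in X\}$ avoids the singular locus of $\mathrm{Li}_{-\ell}$ — the point $z=1$ when $-\ell\le 0$, and the ray $[1,\infty)$ when $-\ell\ge 1$. Since $w$ is real with $w<1$, the set $\{we^{x}:x\in X\}$ is a small neighbourhood of $w\in(-\infty,1)$ and therefore stays off $[1,\infty)$ for $X$ of sufficiently small radius; in the applications of this paper one has $w\in\{\phi^{-1},-\phi\}$, for which this is transparent. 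Consequently $g(x):=\mathrm{Li}_{-\ell}(we^{x})$ is holomorphic on a convex neighbourhood of $X$, with $g^{(n)}(0)=\mathrm{Li}_{-\ell-n}(w)$ by the previous paragraph.

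Then I would invoke Taylor's theorem with the integral form of the remainder along the segment $[0,x]$:
\[
\mathrm{Li}_{-\ell}(we^{x})=\sum_{n=0}^{N}\frac{\mathrm{Li}_{-\ell-n}(w)}{n!}\,x^{n}+\frac{x^{N+1}}{N!}\int_{0}^{1}(1-t)^{N}\,\mathrm{Li}_{-\ell-N-1}\!\left(we^{tx}\right)dt .
\]
Because $(t,x)\mapsto\mathrm{Li}_{-\ell-N-1}(we^{tx})$ is continuous on the compact set $[0,1]\times X$, it is bounded there by some $C_{X,N}<\infty$, so the remainder is $O_{X,N}(|x|^{N+1})$, which is the assertion. (For real $x$ one could instead use the Lagrange form, or bound the Taylor coefficients of $g$ by a Cauchy estimate; all routes give the same conclusion.)

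The routine ingredients — term-by-term differentiation of the defining series and bookkeeping of the iterated derivatives — are straightforward. The only step I would handle with real care, and the main potential obstacle, is the domain issue: one must keep $\{we^{x}:x\in X\}$ off the branch cut $[1,\infty)$ and away from $z=1$ so that every polylogarithm appearing is a genuine analytic (or rational) function with the stated power-series expansion. This is exactly what the hypotheses $w<1$ and ``$X$ compact'' are there to guarantee, and it is also the point that would need to be revisited for the extension to other roots of unity mentioned after Theorem~\ref{nonclosedstatement}, where the admissible range of $X$ changes.
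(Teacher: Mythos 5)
Your proof is correct and follows essentially the same route as the paper: the identity $\frac{d}{dx}\mathrm{Li}_{j}(we^{x})=\mathrm{Li}_{j-1}(we^{x})$ followed by Taylor's theorem with a remainder bounded by compactness of $X$. You merely make explicit the domain/branch-cut considerations and the integral form of the remainder, which the paper leaves implicit.
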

\begin{proof}
It follows from the series definition of the polylogarithm
$$
\frac{d}{dx}\mathrm{Li}_{j}\left(we^{x}\right) = \mathrm{Li}_{j-1}\left(we^{x}\right)
$$
and hence the lemma is just a application of Taylor's theorem from calculus. 
\end{proof}
These lemmas allow us control over the ``major" and ``minor" arcs in our computation.
\begin{lemma}\label{coniequality}
For every $s>0$ sufficiently small and $|v|> s^\frac{2}{3},$ 
$$
\left|\frac{\left(-\phi e^{-s(iv+\frac{1}{2})};e^{-s}\right)_\infty}{\left(\phi^{-1} e^{-s(-iv+\frac{1}{2})};e^{-s}
\right)_\infty}\right| \ll \exp\left( \frac{\pi^2}{5s} - \frac{\sqrt{5}}{2s^\frac{1}{3}}\right).
$$
\end{lemma}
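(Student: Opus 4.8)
The plan is to pass to logarithms and reduce the bound to a statement about the real part of a single dilogarithm. Since the denominator Pochhammer symbol has no zeros (its $n$-th factor $1-\phi^{-1}e^{-s(n+\frac{1}{2})}e^{isv}$ has modulus $\ge 1-\phi^{-1}e^{-s/2}>0$), the quantity to be bounded equals
\[
\exp\Bigl(\re{\Log\bigl(-\phi e^{-s(iv+\frac{1}{2})};e^{-s}\bigr)_\infty}-\re{\Log\bigl(\phi^{-1}e^{-s(-iv+\frac{1}{2})};e^{-s}\bigr)_\infty}\Bigr).
\]
I would apply Lemma~\ref{Liexpansion} with $N=1$ to each factor. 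With $\theta:=sv$, the $k=-1$ terms give $-\frac1s\re{\mathrm{Li}_2(-\phi e^{-s/2}e^{-i\theta})}$ and $-\frac1s\re{\mathrm{Li}_2(\phi^{-1}e^{-s/2}e^{i\theta})}$, while the $k=0,1$ terms and the remainder are $O(1)$ \emph{uniformly in $v$}: indeed $\re{\mathrm{Li}_{1-k}(z)}$ is $-\log|1-z|$ for $k=0$ and a rational function of $z$ with sole pole at $z=1$ for $k=1$, and one checks directly that $|1-\phi^{-1}e^{-s/2}e^{i\theta}|\ge 1-\phi^{-1}$ and $|1+\phi e^{-s/2}e^{-i\theta}|\ge \phi e^{-s/2}-1$ stay bounded away from $0$ for small $s$, uniformly in $\theta$. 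Thus, uniformly for $|v|\le\pi/s$,
\[
\log(\text{LHS})=\frac1s\,g(\theta,s)+O(1),\qquad g(\theta,s):=\re{\mathrm{Li}_2(\phi^{-1}e^{-s/2}e^{i\theta})}-\re{\mathrm{Li}_2(-\phi e^{-s/2}e^{-i\theta})},
\]
and $\overline{\mathrm{Li}_2(z)}=\mathrm{Li}_2(\bar z)$ makes $g(\cdot,s)$ even, so under the minor-arc hypothesis it suffices to treat $\theta\in[s^{1/3},\pi]$ and prove $g(\theta,s)\le\frac{\pi^2}{5}-\frac{\sqrt5}{2}s^{2/3}+O(s)$ there.

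For $\theta$ near $0$ I would Taylor expand $g$ using the identities $\frac{d}{d\theta}\re{\mathrm{Li}_2(ae^{i\theta})}=\arg(1-ae^{i\theta})$ and $\frac{d}{d\theta}\re{\mathrm{Li}_2(be^{-i\theta})}=-\arg(1-be^{-i\theta})$. Combined with $\mathrm{Li}_2(\phi^{-1})=\frac{\pi^2}{10}-\Log^2\phi$, $\mathrm{Li}_2(-\phi)=-\frac{\pi^2}{10}-\Log^2\phi$, with Lemma~\ref{litaylor} to replace $e^{-s/2}$ by $1$ up to $O(s)$, and with the golden-ratio identities $1+\phi=\phi^2$, $\phi+\phi^{-1}=\sqrt5$, this yields $g(0,s)=\frac{\pi^2}{5}+O(s)$ (this is $E_{-2}$), $\partial_\theta g(0,s)=O(s)$ (vanishing as $s\to0$ in accordance with $E_{-1}=0$), and $\partial_\theta^2 g(0,s)=-(\phi+\phi^{-1})+O(s)=-\sqrt5+O(s)$. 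Taylor's theorem with the locally bounded third derivative of $g$ then gives, on a fixed small interval $[0,\delta]$, $g(\theta,s)\le\frac{\pi^2}{5}-\frac{\sqrt5}{2}\theta^2+O(\theta^3)+O(s)$; shrinking $\delta$ so that $\theta\mapsto\frac{\pi^2}{5}-\frac{\sqrt5}{2}\theta^2+O(\theta^3)$ is decreasing on $[0,\delta]$, its maximum over $[s^{1/3},\delta]$ is attained at the left endpoint $\theta=s^{1/3}$, where it equals $\frac{\pi^2}{5}-\frac{\sqrt5}{2}s^{2/3}+O(s)$ (the cubic correction there being only $O(s)$). This is exactly where the constant $\frac{\sqrt5}{2}=\frac12|\partial_\theta^2 g(0,0)|$ and the exponent $s^{2/3}=(s^{1/3})^2$ of the statement come from.

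For the complementary range $\theta\in[\delta,\pi]$ I would use a soft compactness estimate: $g(\cdot,s)\to g(\cdot,0)$ uniformly on $[\delta,\pi]$ — the arguments stay in a compact set on which $\re{\mathrm{Li}_2}$ is uniformly continuous, and $\re{\mathrm{Li}_2}$ even extends continuously across the ray $(1,\infty)$ via $\mathrm{Li}_2(z)+\mathrm{Li}_2(1/z)=-\frac{\pi^2}{6}-\frac12\Log^2(-z)$, so the passage of $-\phi e^{-s/2}e^{-i\theta}$ past that ray near $\theta=\pi$ is harmless — and $g(\cdot,0)$ is strictly decreasing on $[0,\pi]$ because $\partial_\theta g(\theta,0)=\arg(1-\phi^{-1}e^{i\theta})+\arg(1+\phi e^{-i\theta})$ is a sum of two strictly negative terms for $\theta\in(0,\pi)$. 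Hence $\sup_{[\delta,\pi]}g(\cdot,0)=g(\delta,0)$ is a fixed constant strictly below $\frac{\pi^2}{5}$, which beats $\frac{\pi^2}{5}-\frac{\sqrt5}{2}s^{2/3}+O(s)$ once $s$ is small. I expect the main obstacle to be the uniformity bookkeeping in $v$ in the first step — checking that the error in Lemma~\ref{Liexpansion} and all lower-order polylogarithm terms are genuinely $O(1)$ uniformly up to $|v|=\pi/s$, and that taking real parts really does neutralize the branch-cut ambiguity of $\mathrm{Li}_2$ near the endpoints; the derivative identities and golden-ratio simplifications are routine once the special values are recorded. (At a general root of unity $q=\zeta e^{-s}$, the circle radius $\phi^{\pm1}$, the leading constant $\frac{\pi^2}{5}$ and the curvature $\sqrt5$ are replaced by the corresponding quantities at the relevant saddle point; this is the step where the estimates must be adapted.)
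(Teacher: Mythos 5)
Your proposal is correct and follows essentially the same route as the paper: both reduce the ratio to $\exp(h(s,v)/s+O(1))$ with $h$ the real part of the dilogarithm difference via Lemma~\ref{Liexpansion}, locate the maximum over the minor arc at the endpoint $v=s^{-2/3}$ (i.e.\ $\theta=s^{1/3}$), and extract $\frac{\pi^2}{5s}-\frac{\sqrt5}{2}s^{-1/3}$ from the special values $E_{-2}=\frac{\pi^2}{5}$, $E_{-1}=0$, $E_0=\sqrt5$. The only difference is bookkeeping: where the paper asserts monotonicity of $h(s,\cdot)$ on $[0,\pi]$ by inspecting $\partial_v h=s\,\mathrm{Im}\,\Log(\cdots)$ and then Taylor-expands at the endpoint via Lemma~\ref{litaylor}, you split into a local quadratic expansion near $\theta=0$ plus a compactness argument on $[\delta,\pi]$, which justifies the same conclusion somewhat more carefully (and you correctly read the hypothesis as $|v|>s^{-2/3}$, consistent with the arc decomposition in the main argument).
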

\begin{proof}
Define 
$$
h(s,v) =: \mathrm{Re} \left(\mathrm{Li}_2\left(\phi^{-1} e^{-s(-iv+\frac{1}{2})}\right) - \mathrm{Li}_2\left(-\phi e^{-s(iv+\frac{1}{2})} \right)\right).
$$
Apply Lemma \ref{Liexpansion} with $N=-1,$ and obtain
\begin{align} \label{N1Bd}
\left|\frac{\left(-\phi e^{-s(iv+\frac{1}{2})};e^{-s}\right)_\infty}{\left(\phi^{-1} e^{-s(-iv+\frac{1}{2})};e^{-s}\right)_\infty}\right|
\ll \exp\left(\frac{h(s,v)}{s}\right).\end{align}
Taking the derivative of the right hand side we obtain 
\[\frac{\partial }{\partial v}h(s,v)=s\mathrm{Im} \left(\Log\left(\left(1 -\phi^{-1} e^{-s(-iv+\frac{1}{2})}\right)\left(1+\phi e^{-s(iv+\frac{1}{2})}\right)\right)\right)\] 
which can only be zero if $v =0,\pi.$ Inspection shows that for $[0,\pi]$ our function is decreasing and is increasing otherwise. Hence, we can we can bound using Lemma \ref{litaylor} with $N=2,$ and $E_{n}$ defined in equation (\ref{Endefined})
\begin{align} 
\frac{1}{s}h\left(s,v\right) &\le \frac{1}{s}h\left(s,s^{-\frac{2}{3}}\right) \\
& =\frac{1}{s} \mathrm{Re} \left(\mathrm{Li}_2\left(\phi^{-1} e^{is^\frac{1}{3}-\frac{s}{2})}\right) - \mathrm{Li}_2\left(-\phi e^{- is^\frac{1}{3}- \frac{s}{2}}\right)\right) \\
&= \frac{E_{-2}}{s} - \frac{E_{0}}{2s^\frac{1}{3}}+ O(1) \\
&= \frac{\pi^2}{5s} - \frac{\sqrt{5}}{2 s^\frac{1}{3}}+ O(1).\label{inequalitybd}
\end{align}
Substituting inequality (\ref{inequalitybd}) into equation (\ref{N1Bd}) completes the proof.
\end{proof}
We require one last Lemma from \cite{MR1703273}. With the adjustments made by Lemma \ref{litaylor} and Lemma \ref{Liexpansion} this proof follows verbatim from the cited version.
\begin{lemma}\cite[Theorem 3]{MR1703273}\label{pochhamerdefis} For every $s>0,$ and $ w= -\phi,\phi^{-1} <1$ and $v\in \mathbb{R},$
$$
\Log\left( \left(w e^{-s\left(\frac{1}{2}\pm iv\right)},e^{-s}\right)_\infty\right) = \sum_{k=-1}^N \mathrm{Li}_{1-k}(w)\left(-s\right)^{k}\frac{B_{k+1}\left(\frac{1}{2}\pm iv\right)}{(k+1)!} + O\left(s^{N+1}v^{N+2}\right).
$$
\end{lemma}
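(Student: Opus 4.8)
The plan is to recognise this as McIntosh's Theorem~3 with the base point of the $q$-Pochhammer argument shifted to $\tfrac12\pm iv$, and to reproduce his derivation while carrying the dependence on $v$ through every estimate. I would begin from the product expansion
\[
\Log\left(\left(w e^{-s(\frac12\pm iv)};e^{-s}\right)_\infty\right)=\sum_{j=0}^\infty \Log\left(1-w e^{-s(j+\frac12\pm iv)}\right),
\]
and produce the asymptotic series by a shifted Euler--Maclaurin summation in $j$; this is exactly the mechanism that introduces the Bernoulli polynomials $B_{k+1}(\tfrac12\pm iv)$, since the Euler--Maclaurin correction terms for a sum started at the shifted point $\tfrac12\pm iv$ are weighted by $B_k(\tfrac12\pm iv)$.

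To see why the coefficients take the stated shape, I would first run the computation formally. Expanding $\Log(1-x)=-\sum_{\ell\ge1}x^\ell/\ell$, interchanging the order of summation, and summing the geometric progression in $j$ gives
\[
\Log\left(\left(w e^{-s(\frac12\pm iv)};e^{-s}\right)_\infty\right)=-\sum_{\ell=1}^\infty \frac{w^\ell}{\ell}\,\frac{e^{-s\ell(\frac12\pm iv)}}{1-e^{-s\ell}}.
\]
Expanding the ratio with the Bernoulli generating function $\dfrac{e^{xt}}{e^t-1}=\sum_{n\ge0}B_n(x)\dfrac{t^{n-1}}{n!}$ at $t=-s\ell$, $x=\tfrac12\pm iv$, reindexing by $k=n-1$, and recognising $\sum_{\ell\ge1}w^\ell\ell^{\,n-2}=\mathrm{Li}_{2-n}(w)$ collapses the double sum to $\sum_{k\ge-1}\mathrm{Li}_{1-k}(w)(-s)^k B_{k+1}(\tfrac12\pm iv)/(k+1)!$, which is exactly the claimed series. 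This also shows how the two preceding lemmas enter: Lemma~\ref{Liexpansion} supplies the expansion in polylogarithms evaluated at the shifted argument $we^{-s(\frac12\pm iv)}$, and Lemma~\ref{litaylor} re-centres each such polylogarithm at $w$, after which the powers of $s$ regroup into the Bernoulli coefficients.

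The formal interchange above is only licit where $|w e^{-s(j+\frac12)}|<1$, which fails for the first several factors when $w=-\phi$, since $|w|=\phi>1$. I would therefore follow McIntosh's rigorous route, replacing the naive series swap by the analytic continuation of the polylogarithm together with an explicit remainder (a Mellin--Barnes representation, or equivalently the integral remainder of the shifted Euler--Maclaurin formula). This produces the same coefficients while furnishing an exact error term. The one genuinely new point over McIntosh is uniformity in $v$: here the inequality (\ref{eulereqn}) is decisive, as it bounds $|\mathrm{Li}_{-N}(we^{-s(\frac12\pm iv)})|$ by $|\mathrm{Li}_{-N}(|w|e^{-s/2})|$ independently of $v$, while the Bernoulli polynomial contributes the polynomial growth $B_{N+1}(\tfrac12\pm iv)=O(v^{N+1})$.

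The main obstacle is precisely pinning down the remainder as $O\!\left(s^{N+1}v^{N+2}\right)$ uniformly in $v$ while $|w|>1$ forces one to work on the continuation of the polylogarithm. I expect the extra power of $v$ (beyond the $v^{N+1}$ one reads off the leading omitted Bernoulli polynomial) to come from the single derivative that the Euler--Maclaurin remainder places on $f(t)=\Log(1-we^{-st})$: differentiating in the shifted variable yields a factor proportional to the imaginary shift, and bounding the resulting integral of $f^{(N+1)}$ against the periodic Bernoulli function via (\ref{eulereqn}) is where the estimate is won. I would isolate this remainder first, verify it is $O(s^{N+1}v^{N+2})$, and only then read off the displayed coefficients from the main terms.
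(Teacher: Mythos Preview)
Your proposal is correct and matches the paper's approach. The paper itself offers no proof beyond the sentence ``With the adjustments made by Lemma~\ref{litaylor} and Lemma~\ref{Liexpansion} this proof follows verbatim from the cited version'' of McIntosh, and what you outline is precisely that: apply Lemma~\ref{Liexpansion} to get the polylogarithm expansion at the shifted argument $we^{-s(\frac12\pm iv)}$, then apply Lemma~\ref{litaylor} to re-centre each polylogarithm at $w$, with the $v$-uniformity controlled by inequality~(\ref{eulereqn}); your additional formal derivation via the Bernoulli generating function $e^{xt}/(e^t-1)$ is a useful sanity check that the paper omits.
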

\begin{remark}
To generalize this work to all roots of unity, all one needs to do is extend the work of McIntosh \cite{MR1703273} to all roots of unity.  It is already known that such an extension exists (see \cite{MR2427666} for the main term for example).  Given the usefulness of \cite{MR1703273} for studying non modular $q$-series, it would be of independent interest to have such a generalization.
\end{remark}
Applying Lemma \ref{pochhamerdefis} and using the identity $B_{k}(z) =(-1)^k B_{k}(1-z),$
\begin{align*}
-\Log \left(\left(\phi^{-1} e^{-s\left(\frac{1}{2}- iv\right)},e^{-s}\right)_\infty\right) &=- \sum_{k=-1}^N \mathrm{Li}_{1-k}(\phi^{-1})\left(-s\right)^{k}\frac{B_{k+1}\left(\frac{1}{2}- iv\right)}{(k+1)!} +I_1 \\
&=\sum_{k=-1}^N \mathrm{Li}_{1-k}\left(\phi^{-1}\right)s^{k}\frac{B_{k+1}\left(\frac{1}{2}+ iv\right)}{(k+1)!} + I_1.
\end{align*}
Where $I_1= O_{N,X}\left(s^{N+1}v^{N+2}\right).$
Directly applying Lemma \ref{pochhamerdefis}, we get
\begin{align*}
\Log \left( \left(-\phi e^{-s\left(\frac{1}{2}+ iv\right)},e^{-s}\right)_\infty \right) &= -\sum_{k=-1}^N (-1)^{1-k}\mathrm{Li}_{1-k}(-\phi) s^k\frac{B_{k+1}\left(\frac{1}{2}+ iv\right)}{(k+1)!} + I_2.
\end{align*}
with $I_2 =O_{N,X}\left(s^{N+1}v^{N+2}\right).$ 
Adding the two terms and using equations (\ref{Endefined}-\ref{E0}) we now calculate
\begin{align*}
\Log\left( \frac{\left(-\phi e^{-s\left(\frac{1}{2}+ iv\right)},e^{-s}\right)_\infty}{\left(\phi^{-1} e^{-s\left(\frac{1}{2}- iv\right)},e^{-s}\right)_\infty} \right) &= \sum_{k=-1}^{N}E_{k-1}s^k \frac{B_{k+1}\left(\frac{1}{2}+iv\right)}{(k+1)!} + O\left(s^{N+1}v^{N+2}\right) \\
&=\frac{E_{-2}}{s} + E_{-1}B_{1}\left(\frac{1}{2}+iv\right) + \frac{E_{0}B_2\left(\frac{1}{2}+iv\right)}{2!}\\ &+ \sum_{k=2}^{N}E_{k-1}s^k \frac{B_{k+1}\left(\frac{1}{2}+iv\right)}{(k+1)!} + O\left(s^{N+1}v^{N+2}\right) \\
&= \frac{\pi^2}{5s} -s\frac{\sqrt{5}}{2}v^2-\frac{\sqrt{5}s}{24} +J_{N}\left(s,v\right) + O_N\left(s^{N+1}v^{N+2}\right).
\end{align*}
We therefore have
\begin{align*}
F(e^{-s}) &= \sqrt{\frac{s}{\sqrt{5}2\pi }}e^{\frac{\pi^2}{5s}} R(s). 
\end{align*}
where
\begin{align*}
R(s)&=: \sqrt{\frac{s\sqrt{5}}{2\pi}} \int_{-s^{-\frac{2}{3}}}^{s^{-\frac{2}{3}}} e^{-\frac{s\sqrt{5}v^2}{2}}\exp \left(J_{N}\left(s,v\right)-\frac{s\sqrt{5}}{24}+ O\left(s^{N+1}v^{N+2}\right)\right)dv+ O_N\left(s^N\right). \\
\end{align*}
Now make a variable transformation $u= \sqrt[4]{5}\sqrt{s}v$ and $u(\pm s^{-\frac{2}{3}}) =\pm \sqrt[4]{5}s^{-\frac{1}{6}},$
\begin{align*}
R(s)&= \frac{1}{\sqrt{2\pi}}\int_{-s^{-\frac{1}{6}}5^{\frac{1}{4}}}^{s^{-\frac{1}{6}}5^{\frac{1}{4}}} e^{-\frac{u^2}{2}}\exp \left(J_{N}\left(s,\frac{u}{\sqrt{\sqrt{5}s}}\right)-\frac{s\sqrt{5}}{24}+ O\left(s^{\frac{N}{2}}u ^{N+2}\right)\right)du+ O_N\left(s^N\right).
\end{align*}
We now require one last lemma to complete the calculation.
\begin{lemma}\label{Jnbound}
For every $N>0,$ and uniformly for $v\in [5^{-\frac{1}{4}}s^{\frac{1}{6}},5^{\frac{1}{4}}s^\frac{1}{6}],$
$$
\lim_{s\to 0} J_{N}\left(s,\frac{u}{\sqrt{\sqrt{5}s}}\right) =0.
$$
\end{lemma}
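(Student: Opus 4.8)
\emph{Proof proposal.}
The plan is to reduce the statement to finitely many one‑line monomial estimates, using that $J_N$ is by definition only an $N$‑term sum. Writing the truncation out,
\[
J_N\!\left(s,\frac{u}{\sqrt{\sqrt5\,s}}\right)=\sum_{k=2}^{N}E_{k-1}\,\frac{s^{k}}{(k+1)!}\,B_{k+1}\!\left(\frac12+\frac{iu}{\sqrt{\sqrt5\,s}}\right),
\]
so everything reduces to bounding $s^{k}B_{k+1}\big(\tfrac12+iu/\sqrt{\sqrt5\,s}\big)$ for the finitely many $2\le k\le N$. First I would expand the Bernoulli polynomial about $\tfrac12$ via $B_{k+1}(\tfrac12+x)=\sum_{m=0}^{k+1}\binom{k+1}{m}B_{k+1-m}(\tfrac12)\,x^{m}$, a polynomial of degree $k+1$ with absolute constant coefficients, and then substitute $x=iu/\sqrt{\sqrt5\,s}=i\,5^{-1/4}s^{-1/2}u$, so that the $m$‑th monomial becomes a constant times $u^{m}s^{-m/2}$. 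Multiplying back by $s^{k}$, the $(k,m)$ contribution to $J_N$ is a fixed constant times $u^{m}s^{\,k-m/2}$ with $0\le m\le k+1$ and $2\le k\le N$. The key bookkeeping point is that $k-m/2$ is minimized at $m=k+1$, where it equals $\tfrac{k-1}{2}\ge\tfrac12>0$: every monomial occurring in $J_N$ carries at least a half‑power of $s$.

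With this in hand the lemma is immediate. On the range in the statement the argument $u$ of $J_N$ lies in an interval that shrinks like $s^{1/6}$, so $|u|^{m}\le 1$ for every $m\le N+2$ once $s$ is small; hence each monomial is $\ll s^{\,k-m/2}\le s^{1/2}$, and summing the $O(N)$ of them yields $\big|J_N(s,u/\sqrt{\sqrt5\,s})\big|\ll_N s^{1/2}\to 0$ uniformly over the band. For completeness I would also record the sharper statement the rest of the argument uses: on the full integration arc $|u|\le\sqrt[4]{5}\,s^{-1/6}$ one has $|u|^{m}\le 5^{m/4}s^{-m/6}$, hence $|u^{m}s^{\,k-m/2}|\le 5^{m/4}s^{\,k-2m/3}$ with exponent $k-2m/3\ge\tfrac{k-2}{3}\ge 0$; thus $J_N$ is uniformly $O_N(1)$ on the whole arc, every monomial other than the single borderline one $(k,m)=(2,3)$ carrying a strictly positive power of $s$. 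That uniform boundedness is exactly what lets one pull the $O(s^{N/2}u^{N+2})$ correction out of the exponential in the passage to Proposition \ref{mainprop} and dominate the integrand by a constant times the Gaussian weight.

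The only delicate point — ``the obstacle,'' such as it is — is the first term $k=2$, since it alone realizes the borderline half‑power of $s$ paired with the top monomial $u^{3}$. Here I would record the exact shape $B_{3}(\tfrac12+x)=x^{3}-\tfrac14 x$, noting the vanishing of the $x^{0}$ and $x^{2}$ coefficients, so that after substitution the $k=2$ summand is $-\tfrac{2\sqrt[4]{5}}{15}\,i\,u^{3}s^{1/2}+O\!\big(s^{3/2}|u|\big)$ (using $E_{1}=4$), in particular $O\!\big(s^{1/2}|u|^{3}\big)$; this tends to $0$ as soon as $|u|=o(s^{-1/6})$, which the stated band satisfies with room to spare, and it matches the $k=2$ contribution already seen in the leading‑term computation. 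Everything else is mechanical: once $N$ is fixed, the constants $\binom{k+1}{m}B_{k+1-m}(\tfrac12)$ and $E_{k-1}$ form a finite list, so all implied constants depend only on $N$; and for the extension to other roots of unity mentioned earlier, replacing $E_{k-1}$ by the corresponding combination of polylogarithm values leaves the power‑of‑$s$ accounting unchanged.
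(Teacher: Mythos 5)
Your proof is correct and follows essentially the same route as the paper: $J_N$ is a finite sum over $2\le k\le N$, and each term is bounded by its monomials in $u$ after the substitution $v=u/\sqrt{\sqrt5\,s}$. Your bookkeeping is in fact more careful than the paper's, which treats $B_{k+1}$ as if it had degree $k$ rather than $k+1$ and so claims a uniform $O_N\left(s^{1/6}\right)$ decay on the full arc $|u|\le 5^{1/4}s^{-1/6}$; as you correctly observe, the borderline monomial $(k,m)=(2,3)$ is only $O(1)$ there (it equals a nonzero constant at the endpoints), the genuine $o(1)$ statement holds on bands with $|u|=o\left(s^{-1/6}\right)$ such as the (apparently mistyped) one in the lemma, and the uniform $O_N(1)$ bound you record on the whole arc is exactly what the subsequent error estimate in Proposition \ref{mainprop} actually requires.
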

\begin{proof}
Since $N$
 is fixed and finite, it suffices to show for $k\geq 2,$
$$
s^k\frac{B_{k+1}\left(s,\frac{u}{\sqrt{\sqrt{5}s}}\right)}{(k+1)!} \ll s^{\frac{k-1}{2}} u^k \ll s^{\frac{k}{3}-\frac{1}{2}} \ll s^{\frac{1}{6}}.
$$
Therefore
$$
J_{N}\left(s,\frac{u}{\sqrt{\sqrt{5}s}}\right) = \sum_{k=2}^{N} E_{k-1}s^k\frac{B_{k+1}\left(s,\frac{u}{\sqrt{\sqrt{5}s}}\right)}{(k+1)!} \ll_N s^{\frac{1}{6}}
$$
which completes the proof.
\end{proof}
Lemma \ref{Jnbound} gives us our last required estimate
\begin{align*}
&\frac{1}{\sqrt{2\pi}}\int_{-5^{\frac{1}{4}}s^{-\frac{1}{6}}}^{5^{\frac{1}{4}}s^{-\frac{1}{6}}} e^{-\frac{u^2}{2}}\exp \left(J_{N}\left(s,\frac{u}{\sqrt{\sqrt{5}s}}\right)-\frac{s\sqrt{5}}{24}\right)\left(\exp\left( O\left(s^{\frac{N}{2}}u ^{N+2}\right)\right)-1\right)du \\
&\ll \int_{-5^{\frac{1}{4}}s^{-\frac{1}{6}}}^{5^{\frac{1}{4}}s^{-\frac{1}{6}}} e^{-\frac{u^2}{2}} s^{\frac{N}{2}} u^{N+2}dv \ll_N s^\frac{N}{2}.
\end{align*}
We have
$$
R(s) = \frac{1}{\sqrt{2\pi}}\int_{-5^{\frac{1}{4}}s^{-\frac{1}{6}}}^{5^{\frac{1}{4}}s^{-\frac{1}{6}}} e^{-\frac{v^2}{2}}\exp \left(J_{N}\left(s,\frac{u}{\sqrt{\sqrt{5}s}}\right)-\frac{s\sqrt{5}}{24}\right) + O_N\left(s^\frac{N}{2}\right).
$$
This completes the proposition.
\section{Proof of Theorem \ref{nonclosedstatement}}
We begin with a lemma
\begin{lemma}\label{Ekmetrics}
As $n\to \infty,$
$$
E_n =: \left(\Log\left( \phi \right) \right)^{-n-1} n! \bar{E}_n
$$
where $\bar{E}_n = 1 + O\left(K^n\right)$ for some $K<1.$
\end{lemma}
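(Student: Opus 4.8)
The goal is to show that $E_n = (\Log\phi)^{-n-1} n!\,\bar E_n$ with $\bar E_n = 1 + O(K^n)$ for some $K<1$, starting from the closed form $E_n = \mathrm{Li}_{-n}(\phi^{-1}) - (-1)^n \mathrm{Li}_{-n}(-\phi)$.

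The plan is to use the classical series representation of the polylogarithm of negative integer order in terms of the Hurwitz-type expansion, or more directly the identity expressing $\mathrm{Li}_{-n}(e^{\mu})$ as a sum over poles. Concretely, I would invoke the Lerch/Lindelöf-type formula: for a complex number $z = e^{-\lambda}$ with $\mathrm{Re}\,\lambda > 0$,
\[
\mathrm{Li}_{-n}(z) = n!\sum_{k\in\Z} \frac{1}{(\lambda + 2\pi i k)^{n+1}},
\]
valid for $n\ge 1$ (this is the standard Fourier/partial-fraction expansion of $\mathrm{Li}_{-n}$, see e.g.\ the references to \cite{MR2290758} already in the paper). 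Applying this with $z = \phi^{-1} = e^{-\Log\phi}$ gives $\lambda = \Log\phi$, and with $z = -\phi = e^{\Log\phi + i\pi}$ gives $\lambda = -\Log\phi - i\pi$ (so that $\mathrm{Re}\,\lambda < 0$; one uses instead $-\lambda = \Log\phi + i\pi$ with the reflected series). After combining, the key cancellation is that the $(-1)^n$ twist on the second term aligns the dominant poles: the term $\mathrm{Li}_{-n}(\phi^{-1})$ contributes $n!/(\Log\phi)^{n+1}$ from $k=0$, and $-(-1)^n\mathrm{Li}_{-n}(-\phi)$ contributes another copy of $n!/(\Log\phi)^{n+1}$ from its nearest pole, so that the two leading terms add rather than cancel, yielding the main term $2\cdot$ something — so I'd need to be careful with the exact normalization. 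Let me instead structure it so the leading term is extracted cleanly: factor out $n!(\Log\phi)^{-n-1}$ and show everything else is geometrically smaller.

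The key estimate: after factoring out $n!\,(\Log\phi)^{-n-1}$, the remaining sum is
\[
\bar E_n = \sum_{\text{poles }\rho} \left(\frac{\Log\phi}{\rho}\right)^{n+1}
\]
where $\rho$ ranges over $\Log\phi + 2\pi i k$ (from the first polylog) and $-\Log\phi - i\pi + 2\pi i k = \Log\phi + i\pi(2k-1)$ after the sign twist (from the second). The pole of smallest modulus is $\rho = \Log\phi \approx 0.481$, giving the $1$; the next nearest poles have modulus $\sqrt{(\Log\phi)^2 + \pi^2} \approx 3.17$, so $|\Log\phi/\rho| = K < 1$ with $K \approx 0.152$. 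Bounding the tail of the sum over all other poles by a convergent geometric-type series (the sum $\sum_k |\Log\phi + 2\pi i k|^{-(n+1)}$ for $n\ge 1$ is dominated by its first few terms and is $O(K^n)$) gives $\bar E_n = 1 + O(K^n)$. I would present this as: (i) state the partial-fraction formula for $\mathrm{Li}_{-n}$; (ii) substitute the two values $\phi^{-1}$ and $-\phi$ and simplify using $e^{i\pi} = -1$ to handle the $(-1)^n$; (iii) identify the dominant pole and factor it out; (iv) bound the remaining sum by a geometric series with ratio $K = \Log\phi/\sqrt{(\Log\phi)^2+\pi^2} < 1$.

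The main obstacle will be step (ii): getting the branch of the logarithm and the alignment of the $(-1)^n = e^{i\pi n}$ factor exactly right so that the two "main" contributions genuinely reinforce into a single clean term $n!(\Log\phi)^{-n-1}$ (and not, say, twice that, or a term that oscillates in $n$). Writing $-\phi = e^{-\lambda}$ forces $\lambda = \Log(\phi^{-1}) - i\pi = -\Log\phi - i\pi$, which has negative real part, so the naive formula does not apply directly; one must either use $\mathrm{Li}_{-n}(-\phi) = \mathrm{Li}_{-n}(1/(-\phi^{-1}))$ together with the inversion formula $\mathrm{Li}_{-n}(z) + (-1)^n\mathrm{Li}_{-n}(1/z) = 0$ for $n\ge 1$ (which is exactly the Eulerian-number inversion identity $A_n(z) = z^{n+1}A_n(1/z)$ hinted at by \eqref{eulereqn}), whereupon $E_n = \mathrm{Li}_{-n}(\phi^{-1}) + \mathrm{Li}_{-n}(-\phi^{-1})$ with \emph{both} arguments inside the unit disc, and then apply the pole expansion to each. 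That reformulation is the cleanest route: both $\phi^{-1} = e^{-\Log\phi}$ and $-\phi^{-1} = e^{-\Log\phi + i\pi}$ give poles at $\Log\phi + 2\pi i k$ and $\Log\phi - i\pi + 2\pi i k = \Log\phi + i\pi(2k-1)$ respectively, i.e.\ together exactly $\Log\phi + i\pi\Z$, and the $k=0$ term $\Log\phi$ appears once, giving $\bar E_n = 1 + O(K^n)$ with $K = \Log\phi/\sqrt{(\Log\phi)^2 + \pi^2}$. The rest is routine tail estimation.
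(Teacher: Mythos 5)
Your proposal is correct and follows essentially the same route as the paper: the paper's proof simply cites the pole expansion $\mathrm{Li}_{-d}(e^{-\mu}) = d!\sum_{n\in\Z}(2\pi i n+\mu)^{-d-1}$ and substitutes $\mu = \Log\left(\phi\right)$ and $\mu = \pi i - \Log\left(\phi\right)$, so that the lone dominant pole at $\Log\left(\phi\right)$ gives the main term $n!\left(\Log\left(\phi\right)\right)^{-n-1}$ and every remaining pole has modulus at least $\sqrt{\Log^2\left(\phi\right)+\pi^2}$, yielding exactly your $K$. Your preliminary use of the inversion formula to replace $\mathrm{Li}_{-n}(-\phi)$ by $\mathrm{Li}_{-n}(-\phi^{-1})$ is a harmless (and arguably more careful) variant of the paper's direct substitution $\mu = \pi i - \Log\left(\phi\right)$, which has negative real part and implicitly relies on analytically continuing the same identity.
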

\begin{proof}
In \cite[equation (13)]{MR2576699} we have
$$
\mathrm{Li}_{-d}(e^{-\mu}) = d! \sum_{n\in \mathbb{Z}}\frac{1}{\left(2\pi i n + \mu\right)^{d+1}}.
$$
Substituting $\mu = \Log\left( \phi\right),\pi i - \Log \left(\phi\right)$ and adding the two terms produces the lemma.
\end{proof}
We also require a proposition.
\begin{proposition}\label{limitthoerem} For $s\in X \subset(0,1)$ compact
$$ \frac{1}{\sqrt{2\pi}} \int_{-\frac{\sqrt[4]{5}}{\sqrt[6]{s}}}^{\frac{\sqrt[4]{5}}{\sqrt[6]{s}}} e^{-\frac{v^2}{2}}\exp\left(\lim_{\ell\to \infty}J_{4\ell+1}\left(s,i\frac{v}{\sqrt{\sqrt{5}s}}\right)-\frac{\sqrt{5}s}{24}\right)dv=0
$$
uniformly in $X.$
\end{proposition}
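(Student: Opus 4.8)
\medskip

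\noindent The plan is to establish the stronger pointwise fact that, for every $v$ in the interval of integration (a bounded interval, since $X\subset(0,1)$ is compact),
$$
\lim_{\ell\to\infty}\mathrm{Re}\,J_{4\ell+1}\!\left(s,\tfrac{iv}{\sqrt{\sqrt5 s}}\right)=-\infty .
$$
Granting this, $\bigl|\exp\bigl(J_{4\ell+1}(s,\tfrac{iv}{\sqrt{\sqrt5 s}})-\tfrac{\sqrt5 s}{24}\bigr)\bigr|=\exp\bigl(\mathrm{Re}\,J_{4\ell+1}(s,\cdot)-\tfrac{\sqrt5 s}{24}\bigr)\to0$, so $\exp\bigl(J_{4\ell+1}(s,\cdot)-\tfrac{\sqrt5 s}{24}\bigr)\to0$ pointwise in $v$; this is the sense in which the limit inside the exponential in the statement is read, the integrand is then the zero function, and the integral vanishes identically on $X$. (Equivalently, the exponentials being uniformly bounded once $\ell$ is large, one may interchange limit and integral and get $\lim_\ell\int\cdots dv=0$.) Write $\beta:=v/\sqrt{\sqrt5 s}\in\mathbb R$, which for $s\in X$ ranges over the bounded interval $[-s^{-2/3},s^{-2/3}]$; unwinding the substitution used in the proof of Proposition \ref{mainprop}, the $k$th summand here is $E_{k-1}s^kB_{k+1}\!\bigl(\tfrac12+i\beta\bigr)/(k+1)!$, so the Bernoulli polynomial is evaluated at an argument with bounded imaginary part.

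By Lemma \ref{Ekmetrics} that summand equals
$$
t_k(\beta):=\frac{\bar E_{k-1}}{k(k+1)}\Bigl(\frac{s}{\Log\phi}\Bigr)^{\!k}B_{k+1}\!\bigl(\tfrac12+i\beta\bigr),\qquad \bar E_{k-1}=1+O(K^{k-1}).
$$
I would then derive the large-index asymptotics of $B_n(\tfrac12+i\beta)$ for fixed real $\beta$: extracting the coefficient of $t^n$ in $\dfrac{t\,e^{i\beta t}}{2\sinh(t/2)}=\sum_{n\ge0}B_n\!\bigl(\tfrac12+i\beta\bigr)\dfrac{t^n}{n!}$ and pushing the contour past the two nearest poles $t=\pm2\pi i$ (the remaining poles are farther out and contribute a relative error negligible once $n\gg|\beta|$, hence uniformly for $|\beta|\le s^{-2/3}$ on $X$) gives
$$
B_{2m}\!\bigl(\tfrac12+i\beta\bigr)=(-1)^{m}\,\frac{2\,(2m)!}{(2\pi)^{2m}}\,\cosh(2\pi\beta)\,\bigl(1+o(1)\bigr),\qquad m\to\infty,
$$
with a purely imaginary $\sinh$-analogue for odd index that will not be needed.

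Two consequences follow. First, $|B_{k+1}(\tfrac12+i\beta)|$ has exact order $(k+1)!\,(2\pi)^{-(k+1)}$ up to a positive factor independent of $k$, so $|t_{k+1}(\beta)|/|t_k(\beta)|\sim ks/(2\pi\Log\phi)\to\infty$ and the last summand dominates the partial sum. Second, $4\ell+2=2(2\ell+1)$ with $2\ell+1$ odd, so $t_{4\ell+1}$ is \emph{real} (its Bernoulli index is even) and, since $(-1)^{2\ell+1}=-1$,
$$
t_{4\ell+1}(\beta)=-\,\frac{\bar E_{4\ell}\,(4\ell)!\,\cosh(2\pi\beta)}{(2\pi)^{4\ell+2}}\Bigl(\frac{s}{\Log\phi}\Bigr)^{\!4\ell+1}\bigl(1+o(1)\bigr)<0
$$
for all large $\ell$, with $|t_{4\ell+1}(\beta)|\to\infty$ (the factorial beats the geometric factor and $\cosh\ge1$). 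Hence $\mathrm{Re}\,J_{4\ell+1}(s,\cdot)=t_{4\ell+1}(\beta)\bigl(1+o(1)\bigr)\to-\infty$ for every $v$, as required. It is precisely the choice of subsequence $N=4\ell+1$ that makes the sign negative; with $N=4\ell+3$ the dominant term is positive and the exponential would blow up instead.

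The step I expect to be the real obstacle is the large-order asymptotics of $B_n(\tfrac12+i\beta)$ \emph{with its sign} when $n\equiv2\pmod4$: the Fourier expansion of the Bernoulli polynomials cannot be invoked, since it diverges at a non-real argument, so the residue estimate above must be carried out and made quantitative uniformly in $\beta$ over the compact range dictated by $X$ (one checks the discarded poles, of size comparable to $(4\pi)^{-n}\cosh(4\pi\beta)$, are dominated by the retained term, of size comparable to $(2\pi)^{-n}\cosh(2\pi\beta)$, as soon as $n\gg|\beta|$). The remaining points are routine: Lemma \ref{Ekmetrics} supplies both the factorial size and the eventual positivity of $E_{k-1}$; the finitely many low-order terms $t_2,\dots,t_{k_0}$ escaping the ratio estimate contribute only a bounded quantity, negligible against $t_{4\ell+1}\to-\infty$; and the imaginary part of $J_{4\ell+1}$ (which also diverges in modulus) never intervenes, only $\mathrm{Re}\,J_{4\ell+1}$ controlling $|\exp(J_{4\ell+1}-\tfrac{\sqrt5 s}{24})|$. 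Together with Proposition \ref{mainprop}, this yields Theorem \ref{MainTheorem1} and hence Theorem \ref{nonclosedstatement}.
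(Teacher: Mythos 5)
Your proposal is correct and follows essentially the same route as the paper: identify the last summand of $J_{4\ell+1}$ as real, negative, and factorially divergent (the choice $N\equiv 1\pmod 4$ fixing the sign through the parity of the Bernoulli index), so that $\exp\left(\lim_{\ell}J_{4\ell+1}\right)=0$ pointwise and the integral vanishes. The only difference is technical rather than conceptual: where you extract the large-order asymptotics of $B_n\left(\tfrac12+i\beta\right)$ directly from the generating function by a residue computation, the paper expands $B_{k+1}\left(\tfrac12+z\right)$ binomially and invokes Lehmer's asymptotics for $B_j\left(\tfrac12\right)$, arriving at the same dominant $\cosh$ factor.
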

\begin{proof}
It turns out that the function $J_N(s,v)$ diverges factorially. However, the notation obscures things a bit so to prove Proposition \ref{limitthoerem} we will have to do some notational unpacking. After things get written in a clearer way, the Proposition will be obvious.
For the Bernoulli polynomials we need \cite[Theorem 1]{MR0002378}, and \cite[pg 16,165]{MR1688958}, 
\begin{equation} \label{Bkmetrics}
B_{k}\left(\frac{1}{2}\right) =: 2(2\pi)^{-k}k! \cos\left(\frac{\pi }{2} k \right)\bar{B}_k, \qquad \left|\bar{B}_{k}-1 \right| \ll \left(\frac{1}{2}\right)^{k}.
\end{equation}
We can decompose a term of $J_{N}(s,v)$ using \cite[page 275, Exercise 15]{MR0434929},. 
\begin{equation}\label{berngf}
B_{k+1}\left(\frac{1}{2}+z\right) = \sum_{j=0}^{k+1}{k+1\choose j}B_{k+1-j}\left(\frac{1}{2}\right)z^j.
\end{equation}
To make expressions more concise we change units and let $s= 2\pi \Log \left(\phi\right) \alpha.$ Using Lemma \ref{Ekmetrics}, and equations (\ref{Bkmetrics}), (\ref{berngf})
\begin{align*}
E_{k-1}s^k \frac{B_{k+1}\left(\frac{1}{2} +iv \right)}{(k+1)!} 
&= \bar{E}_{k-1}(k-1)! 
\alpha^{k} \sum_{j=0}^{k+1}\frac{(iv)^j}{j!}(2\pi)^k\frac{ B_{k+1-j}\left(\frac{1}{2}\right)}{(k+1-j)!} \\
&= \frac{1}{\pi }\bar{E}_{k-1}(k-1)! \alpha^k\sum_{n=0}^{k+1}\frac{(2\pi i v )^j}{j!}\bar{B}_{k+1-j}\cos \left(\frac{\pi (k+1-j)}{2}\right).
\end{align*}
We wish to do a bit more work before we obtain our simpler notation. Break up $\cos(z) = \frac{z+\bar{z}}{2}.$
\begin{align*}
E_{k-1}s^k \frac{B_{k+1}\left(\frac{1}{2} +iv \right)}{(k+1)!} 
&= -\frac{1}{2\pi}\bar{E}_{k-1}(k-1)! 
\alpha^k\sum_{j=0}^{k+1}\frac{(2\pi i v)^j\bar{B}_{k+1-j}\left(i^{k+1-j} + i^{j-k-1}\right)}{j!} \\
&= -\frac{1}{2\pi i}\bar{E}_{k-1}(k-1)! 
\alpha^k i^k\sum_{j=0}^{k+1}\frac{(2\pi i v)^j\bar{B}_{k+1-j}\left(i^{-j} +(-1)^{k+1} i^{j}\right)}{j!} \\
&= -\frac{1}{2\pi i}\bar{E}_{k-1}(k-1)! 
\alpha^k i^k\left( e_{k+1,B}\left(2\pi v\right) - (-1)^{k}e_{k+1,B}\left(-2\pi v\right)\right).
\end{align*}
Here we use 
\begin{equation}\label{partialexpdef}
e_{k+1,B}\left(z\right) =: \sum_{j=0}^{k+1} \frac{\bar{B}_{k+1-j}z^{j}}{j!}.
\end{equation}
Note 
\begin{equation}\label{partialexplimit}
\lim_{k\to \infty } e_{k+1,B}\left(z\right) = e^{z}
\end{equation}
uniformly on compact subsets. We then let
$$
T_{k,B}(z) = \frac{1}{2}\left( e_{k+1,B}(2\pi v) - (-1)^{k}e_{k+1,B}(-2\pi v)\right).
$$ 
By summing across $k$ it obtain 
$$
\lim_{\ell \to \infty }\frac{J_{4\ell+1}\left(s,\frac{iv }{\sqrt{\sqrt{5}s}}\right)}{(4\ell)!\alpha^{4\ell+1}} =-\frac{1}{2\pi} \lim_{\ell \to \infty }T_{4\ell+2}\left(v\right) = -\frac{1}{2\pi}\cosh \left(v\right).
$$
Hence, for every $v \neq 0$ we have the point wise limit 
$$
\exp\left(\lim_{\ell\to \infty}J_{4\ell+1}\left(s,\frac{u}{\sqrt{\sqrt{5}s}}\right)\right) =0.
$$
Therefore we have for every $s>0,$
$$ \frac{1}{\sqrt{2\pi}} \int_{-\frac{\sqrt[4]{5}}{\sqrt[6]{s}}}^{\frac{\sqrt[4]{5}}{\sqrt[6]{s}}} e^{-\frac{v^2}{2}}\exp\left(\lim_{\ell\to \infty}J_{4\ell+1}\left(s,i\frac{v}{\sqrt{\sqrt{5}s}}\right)-\frac{\sqrt{5}s}{24}\right)dv=0.
$$
which completes the proof.
\end{proof}

\begin{proof}[Proof of Theorem \ref{nonclosedstatement}]
Suppose $R(s)$ has a convergent series expansion which defines an analytic function. The identity theorem then forces us to match coefficients on equation (\ref{unclosednessdef}) and conclude

\begin{equation}\label{finalequation}
R(s^6)\thicksim \frac{1}{\sqrt{2\pi}}  \int_{-\frac{\sqrt[4]{5}}{s}}^{\frac{\sqrt[4]{5}}{s}} e^{-\frac{v^2}{2}}\exp\left(\lim_{N\to \infty}J_{N}\left(s^6,i\frac{v}{\sqrt{\sqrt{5}}s^3}\right)-\frac{\sqrt{5}s^6}{24}\right)dv
\end{equation}
as $s\to 0.$  This expression is not a well defined function of $s$ as $J_N$ doesn't converge.  To prove this, we assume for contradiction that $J_N$ had a limiting value then Proposition \ref{unclosednessdef} forces $R(s)\thicksim 0.$ Taking the limit as $s\to 0,$ we get $R(0)=0.$ This contradicts Proposition \ref{mainprop} which has established with $N=2,$ that $\lim_{s\to 0}R(s)=1.$ 

Thus as the right hand side of equation \ref{finalequation} isn't well defined, it it cannot represent an analytic function.  
Any power series estimation of $R(s)$ which estimates $R(s)$ as $s\to 0$ must be a divergent power series and must be truncated in $N$ with accuracy up to $O_N\left(s^{\frac{N}{2}}\right).$

Now if we note that the $c_j$ were co finite then $R(s)$ would be asymptotic to an entire function of the form $\exp\left(p(s)\right)$.  Hence the right hand side should have a convergent power series.
\end{proof}

\end{document}